\newtheorem{theorem}{Theorem}[section]
\newtheorem{prop}[theorem]{Proposition}
\newtheorem{definition}[theorem]{Definition}
\newtheorem{remark}[theorem]{Remark}
\numberwithin{equation}{section}
\newcommand{\I}{\mathrm{i}}
\newcommand{\E}{\mathrm{e}}
\DeclareMathOperator{\Tr}{Tr}
\DeclareMathOperator{\Vol}{Vol}
\DeclareMathOperator{\ch}{ch}
\DeclareMathDelimiter{\Norm}{\mathord}{largesymbols}{"3E}{largesymbols}{"3E}
\DeclareMathOperator{\im}{im}
\begin{document}

\baselineskip 16pt
\parskip 8pt
\sloppy


\title[]{Torus Links $T_{2s,2t}$ and $(s,t)$-log VOA}


\author[K. Hikami]{Kazuhiro Hikami}

\address{Faculty of Mathematics,
  Kyushu University,
  Fukuoka 819-0395, Japan.}

\email{
  \texttt{khikami@gmail.com}
}

\author[S. Sugimoto]{Shoma Sugimoto}
\address{Faculty of Mathematics,
  Kyushu University,
  Fukuoka 819-0395, Japan.}

\email{
  \texttt{shomasugimoto361@gmail.com}
}



\date{\today}

\begin{abstract}
  We reveal  an intimate  connection between
  the torus link $T_{2s,2t}$ and the logarithmic $(s,t)$ VOA.
  We show that   the singlet  character of $(s,t)$-log VOA at the root
  of unity
  coincides with the Kashaev invariant and that it has a property of
  the quantum modularity.
  Also shown is that
  the tail of the
  $N$-colored Jones polynomial gives the character.
  Furthermore
  we propose a geometric method to computer the character.
\end{abstract}


\keywords{colored Jones polynomial,
  logarithmic VOA,
  quantum modular form}

\subjclass[2000]{
}


\maketitle
\section{Introduction}
Quantum invariants of knots and 3-manifolds are fascinating topics
from both physics and mathematics.
Recent studies reveal
intriguing connections with geometry,
number theory, and representation theory.

From a geometric side,
a key object is
the Kashaev invariant $\langle K \rangle_N$~\cite{Kasha95},
which
is believed to have a
structure of 
hyperbolic geometry in a large $N$ limit via the volume conjecture
\begin{equation}
  \label{eq:6}
  \lim_{N\to\infty}\frac{2\pi}{N} \log \left|\langle K \rangle_N
  \right|
  =
  \Vol(S^3\setminus K) ,
\end{equation}
where $\Vol$ denotes a hyperbolic volume.
It is well known~\cite{MuraMura99a}
that
the Kashaev invariant
$\langle K\rangle_N$ for a knot $K$
is a specific  value of the $N$-colored Jones polynomial
$J_N(q;K)$, which is a $\mathcal{U}_q(\mathfrak{sl}_2)$
knot invariant with $N$-dimensional irreducible representation;
\begin{equation}
  \label{Kashaev_Jones}
  \langle K \rangle_N
  =J_N(\zeta_N ; K),
  \qquad
  \zeta_N=\E^{\frac{2\pi\I}{N}} .
\end{equation}

Through extensive studies on the Kashaev invariant,
a notion of the quantum modular form were proposed~\cite{Zagier09a}.
A typical example of the quantum modular form is the
Kontsevich--Zagier series~\cite{DZagie01a}, which was generalized to 
those corresponding to the Kashaev invariant for the torus knot $T_{2,2t+1}$~\cite{KHikami03c}.
These results  suggest  that  the quantum invariant of knots and
3-manifolds
has an intimate
connection with a $q$-series,
which has a similar property with mock modular
forms~\cite{LawrZagi99a,KHikami03a}.

Such a $q$-series is reminiscent of the character of logarithmic
conformal field theories.
See~\cite{ChChFeFeGuHaPa22a} where  studied
was  a relationship between  the WRT invariant for 3-manifolds and the
character of VOA.
Therein the character of $(s,t)$-log VOA  explicitly given
in~\cite{FeiGaiSemTip06a} plays a role.
Later in~\cite{BringMilas15a}
the character of the singlet $(1,t)$-log VOA was
identified with a tail of the colored Jones
polynomial for  torus link $T_{2,2t}$
which was proved to exist for alternating link~\cite{DasbaXSLin06a}.
This indicates that not only the WRT invariant but the quantum knot
invariant  could have a connection with the character of VOA.

The purpose of this letter is to study a relationship between the
colored Jones polynomial for torus link $T_{2s,2t}$ and
the character of $(s,t)$-log VOA.
In Sections~\ref{sec:Jones_link} and~\ref{sec:Kashaev_link},
we introduce 
Laurent polynomials
as a family of the colored Jones polynomial
$J_N(q;T_{2s,2t})$.
In Section~\ref{sec:log-VOA},
we shall show that they coincide with the singlet characters of
$(s,t)$-log VOA at the root of unity.
We also discuss that the tail of $J_N(q;T_{2s,2t})$ also gives the character.
Section~\ref{sec:modular} is devoted to a quick review of
properties of modular forms
and their Eichler integrals.
In Section \ref{section: logVOA and Atiyah-Bott formula}, we
propose a geometrical method to calculate
the characters of irreducible modules of $(s,t)$-log VOA using
the Atiyah--Bott formula \cite{AtiyahBott}.

\section{Colored Jones Polynomials for $T_{2s,2t}$}
\label{sec:Jones_link}
We assume that $s$ and $t$ are positive coprime integers.
The torus knot $T_{s,t}$ has a braid group presentation
$\left(\sigma_1 \sigma_2\dots \sigma_{s-1}\right)^t$.
Here $\sigma_i$ denotes the generators of the Artin braid group  satisfying
the braid relations
\begin{equation*}
  \sigma_i \sigma_{i+1} \sigma_i
  =\sigma_{i+1}\sigma_i\sigma_{i+1},
  \qquad\qquad
  \sigma_i \sigma_j=\sigma_j \sigma_i,
  \quad \text{for $|i-j|>1$}.
\end{equation*}
The $N$-colored Jones polynomial for
the $0$-framing torus knot $T_{s,t}$
was given in~\cite{Mort95a} 
based on~\cite{RossJone93a} as
\begin{equation}
  \label{Jones_torus_knot}
  J_N(q;T_{s,t})
  =
  \frac{q^{\frac{1}{4}s t (1-N^2)}}{q^{\frac{N}{2}} -
    q^{-\frac{N}{2}}}
  \sum_{r=-\frac{N-1}{2}}^{\frac{N-1}{2}}
  \left(
    q^{s t r^2 - (s+t)r+\frac{1}{2}}
    - q^{ s t r^2 -(s-t) r - \frac{1}{2}}
  \right) .
\end{equation}
Here the invariant
$J_N(q;K)$
is normalized so that $J_N(q;\text{unknot})=1$.

We have interests in the 2-component
torus link $T_{2s, 2t}$, which has a
braid group
presentation  
$\left(\sigma_1\sigma_2\dots  \sigma_{2s-1}\right)^{2t}$
as in  Fig.~\ref{fig:torus_link}.
Therein
we have used  the braid relation to see that it is 
a cabling of 
the torus knot $T_{s,t}$.
As is shown in Fig.~\ref{fig:curl},
the braid $\sigma_i^2$ corresponds to the twist which is a central
element of the ribbon category.
Then,
by
replacing the braid $\sigma_i^2$ by the twists in Fig.~\ref{fig:torus_link},
the $N$-colored Jones polynomial for $T_{2s,2t}$ can be given
by use of $J_N(q;T_{s,t})$ in~\eqref{Jones_torus_knot} 
as
\begin{equation}
  \label{Jones_link_2s2t}
  J_N(q;T_{2s,2t})
  =
  \frac{
    q^{s t (1-N^2)}
  }{q^{\frac{N}{2}}-q^{-\frac{N}{2}}}
  \sum_{j=0}^{N-1}
  \sum_{k=-j}^j
  \left(
    q^{s t k^2-(s+t)k+\frac{1}{2}} -
    q^{s t k^2-(s-t)k-\frac{1}{2}}
  \right) .
\end{equation}
Here both two components of the link are assigned
the $N$-dimensional irreducible representation
of $\mathcal{U}_q(\mathfrak{sl}_2)$.
We note that
the colored HOMFLY polynomial for torus link are given in terms of the Schur function~\cite{LinZhe10a}.
\begin{figure}[htbp]
  \centering
  \includegraphics[scale=.8, align=c]{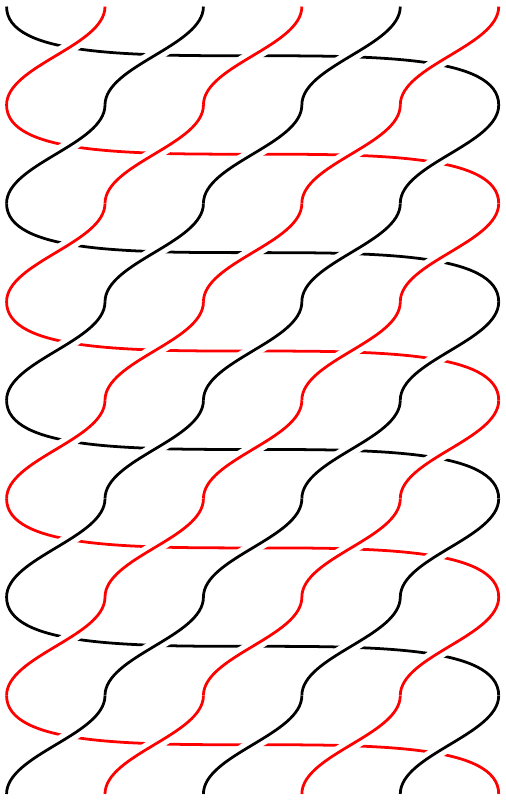}
  \quad $\approx$ \quad
  \includegraphics[scale=.8, align=c]{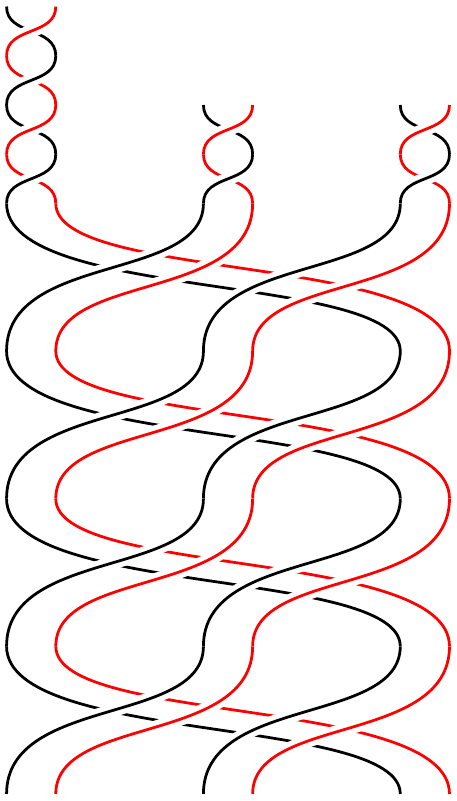}
  \caption{A braid group presentation for the
    torus link   $T_{6,8}$.
    The second component of the link is in red.
    The right hand side is
    an isotopic diagram, which shows
    that $T_{6,8}$
    is a cabling of
    the torus knot $T_{3,4}$.
  }
  \label{fig:torus_link}
\end{figure}

\begin{figure}[htbp]
  \centering
  \includegraphics[scale=.8,  align=c]{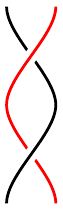}
  \quad $\approx$ \quad
  \includegraphics[scale=.8,  align=c]{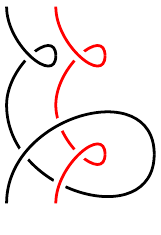}
  \caption{An isotopy of $\sigma_i^2$.}
  \label{fig:curl}
\end{figure}

One sees that,
for a sufficiently  large $N$, the tail of the colored Jones
polynomial stabilize,
and the polynomial is read as
\begin{multline}
  \label{large_N_Jones_link}
  J_N(q;T_{2s,2t})
  =
  N q^{\frac{N-1}{2}+
    s t (1-N^2)}
  \\
  \times
  \left(
    1 -  q -\frac{N-1}{N} q^{(s-1)(t-1)}
    +\frac{N-1}{N} q^{s t+s -t}+
    \frac{N-1}{N} q^{s t-s+t}
    -\frac{N-1}{N} q^{(s+1)(t+1)}
    +\dots
  \right) .
\end{multline}
We will give a proof later in~\eqref{colored Jones and (s,t)-logVOA}.

For our later use, we recall that~\cite{KHikami03a}
\begin{equation}
  \label{Jones_22p}
  J_N(q;T_{2,2p})
  =
  \frac{q^{p(1-N^2)}}{q^{\frac{N}{2}}-q^{-\frac{N}{2}}}
  \sum_{j=0}^{N-1}
  q^{p j(j+1)}
  \left(
    q^{j+\frac{1}{2}}-q^{-j-\frac{1}{2}}
  \right)
\end{equation}

\section{Modular Forms and Eichler Integrals}
\label{sec:modular}
\subsection{Unary Theta Series}
We introduce periodic functions with mean values zero as follows. 
\begin{gather}
  \label{eq:10}
  \psi_{2p}^{(a)}(k)=
  \begin{cases}
    \pm 1, & \text{for $k=\pm a \mod 2p$},
    \\
    0, & \text{otherwise},
  \end{cases}
  \\
  \chi_{2st}^{(n,m)}(k)
  =
  \begin{cases}
    1, & \text{for $k=\pm (n t - ms ) \mod 2 s t $,}\\
    -1, & \text{for $k=\pm (n t + m s) \mod 2 s t$,} \\
    0, & \text{otherwise.}
  \end{cases}
\end{gather}
Here $s$ and $t$ are coprime positive integers.
We assume that
$0<a<p$,
and $0<n<s$, $0<m<t$.
See that $\chi_{2st}^{(n,m)}(k)=\chi_{2st}^{(s-n,t-m)}(k)$.
The unary theta series  are defined by
\begin{gather}
  \label{define_Psi}
  \Psi_p^{(a)}(\tau)
  =\frac{1}{2}\sum_{k\in \mathbb{Z}} k \, \psi_{2p}^{(a)}(k) \,
  q^{\frac{k^2}{4p}} ,
  \\
  \label{define_Phi}
  \Phi_{s,t}^{(n,m)}(\tau)
  =
  \frac{1}{2}
  \sum_{k\in\mathbb{Z}}
  \chi_{2st}^{(n,m)}(k) \, q^{\frac{k^2}{4st}} ,
\end{gather}
where $q=\E^{2\pi\I \tau}$ for
$\tau \in \mathbb{H}$.
The $q$-series, $\Psi_p^{(a)}(\tau)$ and
$\Phi_{s,t}^{(n,m)}(\tau)$, are vector-valued modular forms with
weight $\frac{3}{2}$ and $\frac{1}{2}$, respectively.
We have
\begin{gather}
  \label{eq:15}
  \begin{aligned}[t]
    & \Psi_p^{(a)}(\tau)
      =
      \left(\frac{\I}{\tau}\right)^{\frac{3}{2}}
      \sum_{b=1}^p \sqrt{\frac{2}{p}}
      \sin\left( \frac{a b}{p}\pi \right) \,
      \Psi_p^{(b)}\left( - \tfrac{1}{\tau}\right) ,
    \\
    &
      \Psi_p^{(a)}(\tau+1)=\E^{\frac{a^2}{2p}\pi\I}
      \,
      \Psi_p^{(a)}(\tau) ,
  \end{aligned}
  \\
  \begin{aligned}[t]
    &
      \Phi_{s,t}^{(n,m)}(\tau)
      =
      \sqrt{\frac{\I}{\tau}}
      \sideset{}{'}\sum_{n^\prime,m^\prime}
      S(s,t)_{n,m}^{n^\prime,   m^\prime}
      \Phi_{s,t}^{(n^\prime, m^\prime)}\left(-\tfrac{1}{\tau}\right) ,
    \\
    &
      \Phi_{s,t}^{(n,m)}(\tau+1)=
      \E^{\frac{(n t - m s)^2}{2 s t}\pi\I}
      \Phi_{s,t}^{(n,m)}(\tau) ,
  \end{aligned}
\end{gather}
where
$\Sigma^\prime_{n^\prime,m^\prime}$
means that
$n^\prime$ and $m^\prime$ runs over
a $\frac{1}{2}(s-1)(t-1)$-dimensional space,
and
\begin{equation}
  S(s,t)_{n,m}^{n^\prime,m^\prime}
  =\sqrt{\frac{8}{s t}}(-1)^{n m^\prime+n^\prime m+1}
      \sin\left(n n^\prime \frac{t}{s}\pi\right)
      \sin\left(m m^\prime \frac{s}{t}\pi\right) .
\end{equation}
The weight~$0$ modular forms,
$\frac{\Psi_P^{(a)}(\tau)}{[\eta(\tau)]^3}$
and
$\frac{\Phi_{s,t}^{(n,m)}(\tau)}{\eta(\tau)}$
where $\eta(\tau)$ denotes the Dedekind $\eta$-function,
are  characters of the
$A^{(1)}_1$ conformal field theory and
the Virasoro algebra
$\operatorname{Vir}_{(s,t)}
=\mathcal{W}_2(s,t)
$ 
respectively.

\subsection{Eichler Integrals}
Following~\cite{LawrZagi99a,DZagie01a,KHikami03c,KHikami03a},
we introduce the Eichler integrals of the vector modular forms~\eqref{define_Psi} and~\eqref{define_Phi}
as
\begin{gather}
  \label{tilde_Psi}
  \widetilde{\Psi}_p^{(a)}(\tau)
  =
  \sum_{k=0}^\infty \psi_{2p}^{(a)}(k) \, q^{\frac{k^2}{4p}} ,
  \\
  \label{tilde_Phi}
  \widetilde{\Phi}_{s,t}^{(n,m)}(\tau)
  =
  -\frac{1}{2}\sum_{k=0}^\infty k \, \chi_{2 s t}^{(n,m)}(k) \,
  q^{\frac{k^2}{4 s t}} .
\end{gather}
Limiting values
when $\tau \downarrow \frac{1}{N}$ for $N\in\mathbb{Z}_{>0}$
were given in~\cite{KHikami03a,KHikami03c}
as
\begin{gather}
  \label{limit_Eichler_Psi}
  \widetilde{\Psi}_{p}^{(a)}\left(\tfrac{1}{N}\right)
  =
  -\sum_{k=1}^{2p N}
  \psi_{2p}^{(a)}(k) \, \E^{\frac{k^2}{2p N}\pi \I} \,
  B_1\left(\tfrac{k}{2 p N}\right) ,
  \\
  \label{limit_Eichler_Phi}
  \widetilde{\Phi}_{s,t}^{(n,m)}
  \left(\tfrac{1}{N}\right)
  =
  \frac{s t N}{2}
  \sum_{k=1}^{2s t N}
  \chi_{2st}^{(n,m)}(k) \, \E^{\frac{k^2}{2s t N}\pi\I} \,
  B_2\left(
    \tfrac{k}{2 s t N}
  \right) ,
\end{gather}
where $B_n(x)$ is the $n$-th Bernoulli polynomials,
$B_1(x)=x-\frac{1}{2}$ and
$B_2(x)=x^2-x+\frac{1}{6}$.
We note that
these limiting values were closely related  with the Kashaev invariant
$\langle T_{2,2p} \rangle_N$ and
$\langle T_{s,t} \rangle_N$ respectively~\cite{KHikami03a,KHikami03c};
\begin{align}
  \label{eq:2}
  \left\langle T_{2,2p} \right\rangle_N
  & =
    - p N
    \zeta_N^{~\frac{3p^2-1}{4p}} \,
    \widetilde{\Psi}_{p}^{(p-1)}
    \left( \tfrac{1}{N} \right) ,
  \\
  \left\langle T_{s,t} \right\rangle_N
  & =
    \zeta_N^{~\frac{s^2t^2-s^2-t^2}{4st}} \,
    \widetilde{\Phi}_{s,t}^{(s-1,1)}
    \left( \tfrac{1}{N} \right) .
\end{align}
These follow from~\eqref{Jones_torus_knot} and~\eqref{Jones_22p}.
See also~\cite{KHikami04b,KHikami05a,KHikami10a} for a relationship
with the WRT invariant
for  Seifert manifolds.

Asymptotic expansions
of 
the Kashaev invariants
$\langle T_{2,2p}\rangle_N$ and
$\langle T_{s,t} \rangle_N$
in $N\to\infty$
follow from~\cite{KHikami03a,KHikami03c}
\begin{gather}
  \label{modular_tilde_Psi}
  \widetilde{\Psi}_p^{(a)}\left(\tfrac{1}{N}\right)
  +
  \sqrt{\frac{N}{\I}}
  \sum_{b=1}^{p-1} \sqrt{\frac{2}{p}}
  \sin\left(\frac{a b}{p}\pi\right) \,
  \left(1-\frac{b}{p}\right) \E^{-\frac{b^2}{2p}\pi \I N}
  \simeq
  \sum_{k=0}^\infty
  \frac{L\left( -2k, \psi_{2p}^{(a)} \right)}{k!}
  \left(
    \frac{\pi \I}{2p N}
  \right)^k ,
  \\
  \label{modular_tilde_Phi}
  \widetilde{\Phi}_{s,t}^{(n,m)}\left(\tfrac{1}{N}\right)
  +\left( \frac{N}{\I} \right)^{\frac{3}{2}}
  \sideset{}{'}\sum_{n^\prime,m^\prime}
  S(s,t)_{n,m}^{n^\prime,m^\prime}
  \phi_{s,t}(n^\prime,m^\prime)\,
  \E^{-\frac{(n^\prime t - m^\prime s)^2}{2st}\pi \I N}
  \simeq
  \frac{-1}{2}
  \sum_{k=0}^\infty
  \frac{L\left(-2k-1,\chi_{2st}^{(n,m)}\right)}{
    k!}
  \left(
    \frac{\pi \I}{2 s t N}
  \right)^k ,
\end{gather}
where
\begin{equation}
  \phi_{s,t}(n,m)=
  \begin{cases}
    (s-n)m , & n t > m s ,
    \\
    n(t-m), & n t< m s .
  \end{cases}
\end{equation}
These prove  the quantum modularity~\cite{Zagier09a} of the Eichler
integrals~\eqref{tilde_Psi} and~\eqref{tilde_Phi}.


\section{Kashaev Invariant of $T_{2s,2t}$}
\label{sec:Kashaev_link}

As a family of the $N$-colored Jones
polynomial~\eqref{Jones_link_2s2t}
for
$T_{2s,2t}$,
we define
\begin{equation}
  \label{general_J}
  {\mathcal{J}_N}(q;
  \begin{smallmatrix}
    (n,m)\\
    2s,2t
  \end{smallmatrix}
  )
  =
  \frac{1}{q^{\frac{N}{2}}-q^{-\frac{N}{2}}}
  \sum_{c=0}^{N-1} \sum_{r=-c}^c
  \left(
    q^{s t r^2-(n t+m s) r + \frac{m n}{2}}
    - q^{s t r^2+(n t-m s)r-\frac{m n}{2}}
  \right) .
\end{equation}
We have
${\mathcal{J}_N}
(q;
\begin{smallmatrix}
  (n,m) \\
  2s,2t
\end{smallmatrix}
)\in \mathbb{Z}[q^{\frac{1}{2}},q^{-\frac{1}{2}}]$ due to
\begin{equation}
  \label{ex_Gauss_sum}
  \sum_{k=0}^{N-1}
  \left(
    \zeta_N^{~ \frac{(2 s t k - (n t + m s))^2}{4 s t}}
    -
    \zeta_N^{~ \frac{\left( 2 s t k + (n t - m s) \right)^2}{4 s t}}
  \right)=0 .
\end{equation}
One sees that
\begin{equation}
  {\mathcal{J}_N}
  (q;
  \begin{smallmatrix}
    (1,1)\\
    2s, 2t
  \end{smallmatrix}
  )=
  q^{-s t (1-N^2)}
  J_N(q;T_{2s, 2t}) ,
\end{equation}
and that
the Kashaev invariant~\eqref{Kashaev_Jones} for $T_{2s,2t}$ is given as
\begin{equation}
  \label{eq:1}
  \langle T_{2s,2t}\rangle_N
  =
  J_N(\zeta_N ; T_{2s,2t})
  =
  \zeta_N^{~  s t}
  {\mathcal{J}_N}
  (\zeta_N;
  \begin{smallmatrix}
    (1,1)\\
    2s,2t
  \end{smallmatrix}
  ) .
\end{equation}

We shall confirm the quantum modularity of
the Kashaev invariant $\langle T_{2s,2t}\rangle_N$.
At the $N$-th root of unity~$\zeta_N$,
the Laurent polynomial~\eqref{general_J}
reduces to
\begin{equation}
  \begin{aligned}[b]
    {\mathcal{J}_N}(\zeta_N ;
    \begin{smallmatrix}
      (n,m)\\
      2s, 2t
    \end{smallmatrix}
    )
    & =
      \frac{1}{N}
      \sum_{c=0}^{N-1}\sum_{r=-c}^c f(r)
    \\
    &
      =
      f(0) +
      \frac{1}{N}\sum_{k=1}^{N-1}
      \left\{ (N-k) \, f(k)+k \,  f(k-N)\right\} ,
  \end{aligned}
\end{equation}
where for brevity we mean
\begin{equation*}
  f(r) =
  \left(
    s t r^2-(n t+m s) r + \frac{m n}{2}
  \right)
  \zeta_N^{~ s t r^2-(n t+m s) r + \frac{m n}{2}}
  -
  \left(
    s t r^2+(n t-m s)r-\frac{m n}{2}
  \right)
  \zeta_N^{~ s t r^2+(n t-m s)r-\frac{m n}{2}} .
\end{equation*}
Then we get
\begin{equation*}
  \begin{aligned}
    &
    \zeta_N^{~ \frac{ (n t)^2+(m s)^2}{4 s t}}
      {\mathcal{J}_N}(\zeta_N;
      \begin{smallmatrix}
        (n, m)\\
        2s, 2t
      \end{smallmatrix}
      )
    \\
    & =
      \sum_{k=0}^{N-1}
      \left\{
      \left( s t k(N-k)+\frac{m n}{2}\right) \,
      \zeta_N^{~ \frac{(2 s t k - (n t + m s))^2}{4 s t}}
      -
      \left( s t k(N-k)-\frac{m n}{2}\right) \,
      \zeta_N^{~ \frac{(2 s t k + (n t - m s))^2}{4 s t}}
      \right\}
    \\
    & =
      \sum_{k=0}^{N-1}
      \Bigl\{
      \left(
      - s t N^2 B_2\left(
      \tfrac{2 s t k - (n t + m s)}{2 s t N}
      \right)
      - (n t + m s) N \, B_1\left(
      \tfrac{2 s t k - (n t + m s)}{2 s t N}
      \right)
      \right) \,
      \zeta_N^{~ \frac{(2 s t k - (n t + m s))^2}{4 s t}}
    \\
    & \qquad \qquad
      -
      \left(
      - s t N^2 B_2\left(
      \tfrac{2 s t k + (n t - m s)}{2 s t N}
      \right)
      + (n t - m s) N \, B_1\left(
      \tfrac{2 s t k + (n t - m s)}{2 s t N}
      \right)
      \right) \,
      \zeta_N^{~ \frac{\left( 2 s t k + (n t - m s) \right)^2}{4 s t}}
    \Bigr\} ,
  \end{aligned}
\end{equation*}
where  we have used~\eqref{ex_Gauss_sum} in the last equality.
Recalling~\eqref{limit_Eichler_Psi} and~\eqref{limit_Eichler_Phi},
we conclude that the
${\mathcal{J}_N}
(\zeta_N;
\begin{smallmatrix}
  (n,m)\\
  2s, 2t
\end{smallmatrix}
)$
can be written as a sum of limiting values of
the Eichler integrals;
\begin{equation}
  \label{J_and_Eichler}
  \frac{1}{N} \,
  \zeta_N^{~ \frac{ (n t)^2+(m s)^2}{4 s t}}
  {\mathcal{J}_N}
  (\zeta_N;
  \begin{smallmatrix}
    (n,m)\\
    2s , 2t
  \end{smallmatrix}
  )
  =
  - \widetilde{\Phi}_{s,t}^{(n,m)}\left(\tfrac{1}{N}\right)
  -\frac{n t - m s}{2} \,
  \widetilde{\Psi}_{s t }^{(n t - m s)}\left(\tfrac{1}{N}\right)
  +\frac{n t + m s}{2} \,
  \widetilde{\Psi}_{s t }^{(n t + m s)}\left(\tfrac{1}{N}\right) .
\end{equation}
As a  consequence of
~\eqref{modular_tilde_Psi}
and~\eqref{modular_tilde_Phi},
we obtain
the quantum modularity of
$\mathcal{J}_N(q;
\begin{smallmatrix}
  (n,m)\\
  2s, 2t
\end{smallmatrix}
)$.

\section{log VOA}
\label{sec:log-VOA}
In this section, we consider the case of $\mathfrak{sl}_2$ of the lattice VOA.
We denote $\alpha$ and $\varpi$ by the simple root and the fundamental weight, respectively.
For a VOA or its module $M$, $\ch_q$ means
$\Tr_Mq^{L_0-\frac{c}{24}}$,
and
$\ch_{q,z}=\Tr_M q^{L_0 - \frac{c}{24}} z^h$.

\subsection{$(s,t)$-log VOA}
Let us consider the lattice VOA $V_{\sqrt{st}Q}$ associated with the rescaled root lattice $\sqrt{st}Q=\sqrt{2st}\mathbb{Z}$.
The irreducible modules of $V_{\sqrt{st}Q}$ are given by $V_{n,m}^+=V_{\sqrt{st}Q+\alpha_{n,m}}$ and $V_{n,m}^-=V_{\sqrt{st}(Q-\varpi)+\alpha_{n,m}}$, where for $1\leq n\leq s$ and $1\leq m\leq t$, set
\begin{align}
  \alpha_{n,m}:=\frac{-t(n-1)+s(m-1)}{\sqrt{st}}\varpi,
  \qquad
  \Delta_{n,m,k}:=\frac{(ms-nt+stk)^2}{4st}.
  \label{define_Delta_n,m,k}
\end{align}
We note that
\begin{align}\label{relation among conformal weight}
  \Delta_{n,m,k}=\Delta_{-n,-m,-k}=\Delta_{s+n,t+m,k}. 
\end{align}
Let $\mathcal{L}$ be the Virasoro algebra at the central charge $c=1-6\frac{(s-t)^2}{st}$.
The Virasoro VOA $\operatorname{Vir}_{s,t}=U(\mathcal{L})|0\rangle$ is a sub VOA of the Heisenberg VOA $V_{\sqrt{st}Q}^{h=0}$.
Then the conformal weight of $e^{\sqrt{st}k\varpi+\alpha_{n,m}}$ is $\Delta_{n,m,k}+\frac{c}{24}$ for the central charge $c=1-6\frac{(s-t)^2}{st}$.
In particular, we have
\begin{align}
  \ch_qV_{n,m}^+=\sum_{k\in\mathbb{Z}}\frac{q^{\Delta_{n,m,2k}}}{\eta(\tau)},
  \qquad
  \ch_qV_{n,m}^-=\sum_{k\in\mathbb{Z}}\frac{q^{\Delta_{n,m,2k+1}}}{\eta(\tau)}.
\end{align}
To define the $(s,t)$-log VOA and its irreducible module, we need the short screening operators 
\begin{align}
  \mathcal{Q}^{[n]}_{+}\colon V_{n,m}^{\pm}\rightarrow V_{s-n,m}^{\pm},
  \qquad
  \mathcal{Q}^{[m]}_{-}\colon V_{n,m}^{\pm}\rightarrow V_{n,t-m}^{\pm}
\end{align}
in \cite[Definition 3.23]{TsuchiyaWood}.
Then the $(s,t)$-log VOA $\mathcal{K}_{1,1}^+$ is defined by $\mathcal{K}_{1,1}^+=\ker \mathcal{Q}^{[1]}_{+}\cap\ker \mathcal{Q}^{[1]}_{-}$~\cite{FeiGaiSemTip06a, TsuchiyaWood}.

\subsection{Characters  and Kashaev Invariant}
\label{subsec: characters of logVOA}
It is known that
there are $2st +\frac{1}{2}(s-1)(t-1)$ irreducible modules of
the $(s,t)$-log VOA $\mathcal{K}_{1,1}^+$.
The characters of irreducible modules
$\mathcal{X}_{n,m}^{\pm}
= \im \mathcal{Q}^{[s-n]}_{+}\cap\im \mathcal{Q}^{[t-m]}_{-}$
were computed
in~\cite{FeiGaiSemTip06a}
as
\begin{align}
  \label{character_X+}
  \ch_q\mathcal{X}_{n,m}^{+}
& =
  \frac{1}{\eta(\tau)}
  \sum_{k\in \mathbb{Z}}
  k^2
  \left(
  q^{\frac{(2s t k - n t - m s)^2}{4 s t}}
  -
  q^{\frac{(2 s t k- n t+m s)^2}{4 s t}}
  \right) ,
  \\
  \label{character_X-}
  \ch_q\mathcal{X}_{n,m}^{-}
& =
  \frac{1}{\eta(\tau)}
  \sum_{k\in \mathbb{Z}}
  k(k+1)
  \left(
  q^{\frac{(2s t k+s t - n t - m s)^2}{4 s t}}
  -
  q^{\frac{(2 s t k+ s t - n t+m s)^2}{4 s t}}
  \right) .
\end{align}
Amongst others, the singlet character was explicitly written
in~\cite{ChChFeFeGuHaPa22a} as 
\begin{equation}
  \label{singlet_character}
  \begin{aligned}[b]
    \eta(\tau) \,
    \ch_q(\mathcal{X}_{n,m}^{+})^{h=0}
    & =
      \sum_{k\in \mathbb{Z}}
      |k| \left(
      q^{\frac{(2s t k - n t - m s)^2}{4 s t}}
      -
      q^{\frac{(2 s t k- n t+m s)^2}{4 s t}}
      \right)
    \\
    & =
      \frac{1}{s t}
      \left(
      \widetilde{\Phi}_{s,t}^{(n,m)}\left(\tau\right)
      +\frac{n t - m s}{2}
      \widetilde{\Psi}_{st}^{(n t - m s)}\left(\tau\right)
      -\frac{n t + m s}{2}
      \widetilde{\Psi}_{st}^{(n t + m s)}\left(\tau\right)
      \right) .
  \end{aligned}
\end{equation}

We point out that,
in view of~\eqref{J_and_Eichler},
the Laurent polynomial~\eqref{general_J} at the $N$-th root of unity
coincides with a limiting value of the singlet
character~\eqref{singlet_character} up to multiples.
\begin{theorem}
  The Kashaev invariant $\langle T_{2s,2t} \rangle_N$  is a limiting
  value of the character of $(s,t)$-log VOA
  $\ch_q(\mathcal{X}_{1,1}^+)^{h=0}$
  (up to the Dedekind $\eta$-function).  
\end{theorem}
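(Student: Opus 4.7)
The proof is essentially a side-by-side comparison of two formulas already established earlier in the excerpt, so the plan is short and the main work is bookkeeping.

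First, I would specialize equation \eqref{J_and_Eichler} at $(n,m)=(1,1)$ to obtain
\begin{equation*}
  \frac{1}{N}\,\zeta_N^{\frac{t^{2}+s^{2}}{4st}}\,
  \mathcal{J}_N(\zeta_N;\begin{smallmatrix}(1,1)\\2s,2t\end{smallmatrix})
  =
  -\widetilde{\Phi}_{s,t}^{(1,1)}\!\left(\tfrac{1}{N}\right)
  -\tfrac{t-s}{2}\widetilde{\Psi}_{st}^{(t-s)}\!\left(\tfrac{1}{N}\right)
  +\tfrac{t+s}{2}\widetilde{\Psi}_{st}^{(t+s)}\!\left(\tfrac{1}{N}\right).
\end{equation*}

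Next, I would take equation \eqref{singlet_character} at $(n,m)=(1,1)$ and evaluate at $\tau=\tfrac{1}{N}$, giving
\begin{equation*}
  \eta\!\left(\tfrac{1}{N}\right)\ch_q(\mathcal{X}_{1,1}^{+})^{h=0}\Big|_{\tau=\frac{1}{N}}
  =
  \tfrac{1}{st}\!\left(
    \widetilde{\Phi}_{s,t}^{(1,1)}\!\left(\tfrac{1}{N}\right)
    +\tfrac{t-s}{2}\widetilde{\Psi}_{st}^{(t-s)}\!\left(\tfrac{1}{N}\right)
    -\tfrac{t+s}{2}\widetilde{\Psi}_{st}^{(t+s)}\!\left(\tfrac{1}{N}\right)
  \right).
\end{equation*}
The key observation is that the right-hand sides of these two displays are negatives of each other up to the factor $st$; that is, the Eichler-integral combination that appears in the Kashaev-side identity is literally $-st$ times the one appearing in the VOA-side identity. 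This is the whole content of the theorem, and it is a direct consequence of formulas already derived in Sections~\ref{sec:Kashaev_link} and~\ref{sec:log-VOA}.

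Finally, I would feed this identification back through \eqref{eq:1}, namely $\langle T_{2s,2t}\rangle_N=\zeta_N^{\,st}\,\mathcal{J}_N(\zeta_N;\begin{smallmatrix}(1,1)\\2s,2t\end{smallmatrix})$, to get the clean formula
\begin{equation*}
  \langle T_{2s,2t}\rangle_N
  = -\,stN\,\zeta_N^{\,st-\frac{s^{2}+t^{2}}{4st}}\,
  \eta\!\left(\tfrac{1}{N}\right)\,
  \ch_q(\mathcal{X}_{1,1}^{+})^{h=0}\Big|_{\tau=\frac{1}{N}},
\end{equation*}
which is exactly the claim that $\langle T_{2s,2t}\rangle_N$ is a limiting value of $\ch_q(\mathcal{X}_{1,1}^{+})^{h=0}$ up to the Dedekind $\eta$-function and an elementary prefactor.

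There is no genuinely hard step: all the analytic content is already packaged in \eqref{J_and_Eichler} (which reduces the Kashaev invariant to boundary values of Eichler integrals via the Bernoulli-polynomial identity and \eqref{ex_Gauss_sum}) and in \eqref{singlet_character} (which expresses the singlet character in the same Eichler integrals). The only mild subtlety worth flagging is the sign convention: one must be careful that the $\pm$ pattern $-\widetilde{\Phi}-\tfrac{t-s}{2}\widetilde{\Psi}+\tfrac{t+s}{2}\widetilde{\Psi}$ on the Kashaev side lines up with $+\widetilde{\Phi}+\tfrac{t-s}{2}\widetilde{\Psi}-\tfrac{t+s}{2}\widetilde{\Psi}$ on the character side, which is why the identification carries an overall minus sign.
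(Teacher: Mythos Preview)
Your proposal is correct and matches the paper's approach exactly: the theorem is stated immediately after the observation that \eqref{J_and_Eichler} and \eqref{singlet_character} express the same combination of Eichler integrals up to the factor $-st$, and the paper offers no further argument. You have simply made the bookkeeping explicit, including the specialization to $(n,m)=(1,1)$ and the use of \eqref{eq:1}, which is entirely appropriate.
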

Note that
in~\cite{ChChFeFeGuHaPa22a}
discussed was a relationship between the singlet character and 
the WRT invariants for
4-fibered Seifert manifolds.
See~\cite{KHikami04e} for quantum modularity.
See also \cite{FujiIwakMuraTera08a,MatsuTeras21a}.

\subsection{Characters and Tail of the Colored Jones Polynomial}

The relationship between the singlet
characters~\eqref{singlet_character}
and
the Laurent polynomial~\eqref{general_J}
can also be seen in different manner.
As a generalization of~\eqref{large_N_Jones_link},
we have the following correspondence.
\begin{theorem}\label{main theorem torus link and logVOA}
  The tail of the $N$-colored Jones polynomial coincides with
  the characters of  the
  $(s,t)$-log VOA;
\begin{equation}\label{colored Jones and (s,t)-logVOA}
  \lim_{N\to\infty}
  \left(
    q^{\frac{( n t)^2+(m s)^2}{4 s t}
      -\frac{N}{2}}
    {\mathcal{J}_N}
    (q;
    \begin{smallmatrix}
      (n,m) \\
      2s, 2t
    \end{smallmatrix}
    )
    -
    N \,\Phi_{s,t}^{(n,m)}(\tau)
  \right)
  =
  \eta(\tau) \,
 \ch_q(\mathcal{X}_{n,m}^{+})^{h=0}.
\end{equation}
\end{theorem}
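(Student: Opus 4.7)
The plan is to rewrite the Laurent polynomial $\mathcal{J}_N(q;\begin{smallmatrix}(n,m)\\2s,2t\end{smallmatrix})$ as a single weighted Gaussian sum over $r$, complete the squares to expose the prefactor $q^{-((nt)^2+(ms)^2)/(4st)}$, and then match the two resulting sums with $\Phi_{s,t}^{(n,m)}(\tau)$ and with the singlet character $\eta(\tau)\,\ch_q(\mathcal{X}_{n,m}^+)^{h=0}$, respectively.

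First I would swap the summations in~\eqref{general_J} via the elementary identity
$\sum_{c=0}^{N-1}\sum_{r=-c}^{c}g(r)=\sum_{r=-(N-1)}^{N-1}(N-|r|)\,g(r)$
and complete the squares
\[
s t r^{2}-(nt+ms)r+\tfrac{mn}{2}=\tfrac{(2str-(nt+ms))^{2}}{4st}-\tfrac{(nt)^{2}+(ms)^{2}}{4st},
\]
with an analogous identity for the other exponent. Using $q^{N/2}-q^{-N/2}=-q^{-N/2}(1-q^{N})$ and setting $f_{r}:=q^{(2str-(nt+ms))^{2}/(4st)}-q^{(2str+(nt-ms))^{2}/(4st)}$, the left-hand side of~\eqref{colored Jones and (s,t)-logVOA} becomes
\[
\frac{-N}{1-q^{N}}\sum_{r=-(N-1)}^{N-1}f_{r}+\frac{1}{1-q^{N}}\sum_{r=-(N-1)}^{N-1}|r|\,f_{r}-N\,\Phi_{s,t}^{(n,m)}(\tau).
\]

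The next step is the matching of the two infinite sums. By reading off the residue classes in~\eqref{eq:10} and applying the reflection $r\mapsto -r$, I would establish $\sum_{r\in\mathbb{Z}}f_{r}=-\Phi_{s,t}^{(n,m)}(\tau)$; the hypotheses $\gcd(s,t)=1$ with $1\le n<s$, $1\le m<t$ guarantee that the four residue classes $\pm(nt\pm ms)\bmod 2st$ are distinct and cover the support of $\chi_{2st}^{(n,m)}$ exactly once each. Applying the same reflection together with $k\mapsto -k$ to~\eqref{singlet_character} yields the companion identity $\sum_{r\in\mathbb{Z}}|r|\,f_{r}=\eta(\tau)\,\ch_{q}(\mathcal{X}_{n,m}^+)^{h=0}$.

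Finally I would pass to the limit. Writing $\sum_{r=-(N-1)}^{N-1}f_{r}=-\Phi_{s,t}^{(n,m)}(\tau)-\sum_{|r|\ge N}f_{r}$, the $N$-piece regroups as $\frac{N}{1-q^{N}}\bigl(\sum_{|r|\ge N}f_{r}+q^{N}\Phi_{s,t}^{(n,m)}(\tau)\bigr)$, which vanishes as $N\to\infty$ in the $q$-adic tail sense because $f_{r}=O(q^{s t r^{2}})$ is Gaussian; meanwhile $\frac{1}{1-q^{N}}\sum_{r=-(N-1)}^{N-1}|r|\,f_{r}\to\sum_{r\in\mathbb{Z}}|r|\,f_{r}$, which equals the right-hand side by the matching step. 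The main obstacle is the bookkeeping of signs and residue classes: one must carefully track how $2str\mp(nt+ms)$ and $2str\pm(nt-ms)$ cover the support of $\chi_{2st}^{(n,m)}$ under the hypotheses on $n,m$, and keep the signs consistent through both the completion of the square and the $r\mapsto -r$ reflection; once those two identities are in place, the tail estimate is routine.
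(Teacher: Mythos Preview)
Your argument is correct and runs in close parallel to the paper's proof: both swap the double sum to $\sum_{r}(N-|r|)\,g(r)$, complete the square to extract the prefactor $q^{-((nt)^2+(ms)^2)/(4st)}$, split off the $N$-piece, and then identify the two pieces with $\Phi_{s,t}^{(n,m)}$ and with the singlet character before controlling the tail. The one genuine difference is in how the identification is carried out. The paper packages the four-term combination
\[
q^{\Delta_{s-n,m,-2r+1}}-q^{\Delta_{s-n,t-m,-2r}}-q^{\Delta_{n,m,-2r}}+q^{\Delta_{n,t-m,-2r-1}}
\]
as $\frac{\eta(\tau)}{r}\,\ch_q\mathcal{J}_{n,t-m,2r-1}$, the character of an irreducible $L(c_{s,t},0)$-module, and then invokes the known decomposition $\ch_q(\mathcal{X}_{n,m}^+)^{h=0}=\sum_{r\ge1}\ch_q\mathcal{J}_{n,t-m,2r-1}$ from~\cite{ChChFeFeGuHaPa22a}. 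You bypass this representation-theoretic step entirely, matching $\sum_r f_r$ and $\sum_r |r|f_r$ directly against the explicit formulas~\eqref{define_Phi} and~\eqref{singlet_character} via the reflection $r\mapsto -r$. Your route is shorter and fully self-contained; the paper's route makes the Virasoro structure of the tail visible, which is thematically relevant to the rest of the article but not logically necessary for the theorem itself.
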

\begin{proof}
This can be proved as follows.
We have
\begin{equation*}
\begin{aligned}[b]
  &
    \left(q^{\frac{N}{2}}-q^{-\frac{N}{2}}\right) \,
    {\mathcal{J}_N}(q;
    \begin{smallmatrix}
      (n,m)\\
      2s,2t
    \end{smallmatrix}
    )
    \\
  &=
    q^{-\frac{m^2s^2+n^2t^2}{4st}}
    \sum_{c=0}^{N-1}\sum_{r=-c}^{c}
    \left( q^{\Delta_{s-n,m,-2r+1}}-q^{\Delta_{n,m,-2r}}
    \right)
  \\
  &=
    q^{-\frac{m^2s^2+n^2t^2}{4st}}
    \left(
    N(q^{\Delta_{s-n,m,1}}-q^{\Delta_{n,m,0}})
    \vphantom{    +\sum_{r=1}^{N-1}(N-r)}
    \right.
  \\
  & \qquad \qquad \left.
    +\sum_{r=1}^{N-1}(N-r)
    (q^{\Delta_{s-n,m,2r+1}}
    -q^{\Delta_{n,m,2r}}
    +q^{\Delta_{s-n,m,-2r+1}}
    -q^{\Delta_{n,m,-2r}})
    \right)\\
  &=
  q^{-\frac{m^2s^2+n^2t^2}{4st}}
    \left(
    N(q^{\Delta_{s-n,m,1}}-q^{\Delta_{n,m,0}})
    \vphantom{+\sum_{r=1}^{N-1}(N-r)}
    \right.
  \\
  & \qquad \qquad \left.
    +\sum_{r=1}^{N-1}(N-r)
  (q^{\Delta_{s-n,m,-2r+1}}
  -q^{\Delta_{s-n,t-m,-2r}}
  -q^{\Delta_{n,m,-2r}}
    +q^{\Delta_{n,t-m,-2r-1}})
    \right)\\
  &=
    q^{-\frac{m^2s^2+n^2t^2}{4st}}
    \left(
    N(q^{\Delta_{s-n,m,1}}-q^{\Delta_{n,m,0}})
    +(q;q)_{\infty}\sum_{r=1}^{N-1}\frac{(N-r)}{r}\ch_q\mathcal{J}_{n,t-m,2r-1}
    \right)
    \\
  &=N\left(q^{\frac{mn}{2}}-q^{-\frac{mn}{2}}\right)
  +
    q^{-\frac{m^2s^2+n^2t^2}{4st}} \, \eta(\tau)
    \sum_{r=1}^{N-1}\frac{(N-r)}{r}\ch_q\mathcal{J}_{n,t-m,2r-1},
\end{aligned}
\end{equation*}
where the third equality follows
from~\eqref{relation among conformal weight},
and 
\begin{align}
\ch_q\mathcal{J}_{n,t-m,2k-1}=\frac{1}{\eta(\tau)}k(q^{\Delta_{s-n,m,-2k+1}}-q^{\Delta_{s-n,t-m,-2k}}-q^{\Delta_{n,m,-2k}}+q^{\Delta_{n,t-m,-2k-1}})
\end{align}
is the character of irreducible $L(c_{s,t},0)$-module $\mathcal{J}_{n,t-m,2k-1}$ generated by $e^{-(k-1)\sqrt{st}\alpha+\alpha_{n,m}}$ (see \cite[(3.42)]{ChChFeFeGuHaPa22a}).
Then we have
\begin{equation*}
  \begin{aligned}[b]
    &
      q^{\frac{m^2s^2+n^2t^2}{4st}-\frac{N}{2}}
      \left( q^N-1 \right) \,
      {\mathcal{J}_N}(q;
  \begin{smallmatrix}
    (n,m)\\
    2s,2t
  \end{smallmatrix}
  )\\
  =&
  Nq^{\Delta_{n,m,0}}\left(q^{mn}-1 \right)
  +
     \eta(\tau)\sum_{r=1}^{N-1}\frac{(N-r)}{r}\ch_q\mathcal{J}_{n,t-m,2r-1}\\
    =
    &N
     \left(
     q^{\Delta_{n,m,0}}(q^{mn}-1)+\eta(\tau)\sum_{r=1}^N\frac{1}{r}\ch_q\mathcal{J}_{n,t-m,2r-1}
     \right)
      -\eta(\tau)
      \sum_{r=1}^N\ch_q\mathcal{J}_{n,t-m,2r-1}\\
  =&N
     \left(
     q^{\Delta_{n,m,0}}(q^{mn}-1)
     +\sum_{r=1}^N(q^{\Delta_{s-n,m,-2r+1}}-q^{\Delta_{s-n,t-m,-2r}}-q^{\Delta_{n,m,-2r}}+q^{\Delta_{n,t-m,-2r-1}})
     \right)
    \\
    & \qquad 
      -\eta(\tau)\sum_{r=1}^N\ch_q\mathcal{J}_{n,t-m,2r-1}
\end{aligned}
\end{equation*}
and thus we get
\begin{multline*}
  \lim_{N\rightarrow\infty}
  \left(
    q^{\frac{m^2s^2+n^2t^2}{4st}-\frac{N}{2}}
    {\mathcal{J}_N}(q;
    \begin{smallmatrix}
      (n,m)\\
      2s,2t
    \end{smallmatrix}
    )
    -N\Phi^{(n,m)}_{s,t}(\tau)
  \right)
  \\
  =
  \lim_{N\rightarrow\infty}
  N
  \left(
    -\sum_{r=1}^N
    \left(
      q^{\Delta_{s-n,m,-2r+1}}-q^{\Delta_{s-n,t-m,-2r}}-q^{\Delta_{n,m,-2r}}+q^{\Delta_{n,t-m,-2r-1}}
    \right)
  \right.
  \\
  \left. \vphantom{\sum_{r=1}^N}
    -q^{\Delta_{n,m,0}}(q^{mn}-1)-\Phi^{(n,m)}_{s,t}(\tau)
    \right)
  +\eta(\tau)\ch_q(\mathcal{X}_{n,m}^{+})^{h=0}.
\end{multline*}
Because
\begin{align*}
\Phi^{(n,m)}_{s,t}(\tau)=-\sum_{r\geq
  1}
  \left(
  q^{\Delta_{s-n,m,-2r+1}}-q^{\Delta_{s-n,t-m,-2r}}-q^{\Delta_{n,m,-2r}}
  +q^{\Delta_{n,t-m,-2r-1}}
  \right)-q^{\Delta_{n,m,0}}(1-q^{mn})
\end{align*}
we have
\begin{multline*}
  \lim_{N\rightarrow\infty}
  N
  \left(
    -\sum_{r=1}^N
    \left(
      q^{\Delta_{s-n,m,-2r+1}}-q^{\Delta_{s-n,t-m,-2r}}-q^{\Delta_{n,m,-2r}}
      +q^{\Delta_{n,t-m,-2r-1}}
    \right)
  \right.
  \\
  \left.\vphantom{\sum_{r=1}^N}
  -q^{\Delta_{n,m,0}}(q^{mn}-1)-\Phi^{(n,m)}_{s,t}(\tau)
  \right)=0.
\end{multline*}
and \eqref{colored Jones and (s,t)-logVOA} is proved.
\end{proof}

Our result~\eqref{colored Jones and (s,t)-logVOA}
is motivated by~\cite{BringMilas15a}
where discussed was a relationship between
the tail of the colored Jones polynomial for the torus link
$T_{2,2t}$ and the singlet~$(1,t)$-log VOA.
We also note that
in~\cite{Kanade23a,Kanade23b}
the tail of the colored $\mathfrak{sl}_r$ polynomial for $T_{s,t}$ gives the
character of $\mathcal{W}_r(s,t)$.

\begin{remark}
We should note that the characters~\eqref{character_X+}
and~\eqref{character_X-} also appear as a tail
when we consider a 3-component torus link $T_{3s,3t}$.
We can apply
the same method with
Section~\ref{sec:Jones_link} to  obtain
\begin{equation}
  \label{triple_link}
  J_N(q;T_{3s,3t})
  =
  \frac{q^{\frac{9}{4}s t (1-N^2)}}{q^{\frac{N}{2}}-q^{-\frac{N}{2}}}
  \sum_{b=0}^{N-1}
  \sum_{\substack{
      |2b-N+1|\leq c\leq 2b+N-1
      \\
      c+N:\text{odd}
    }}
  \sum_{r=-\frac{c}{2}}^{\frac{c}{2}}
  \left(
    q^{s t r^2 - (s+t ) r+\frac{1}{2}} -
    q^{s t r^2 - (s-t ) r-\frac{1}{2}} 
  \right) .
\end{equation}
When we define a family of Laurent polynomials by
\begin{equation}
  \mathcal{J}_N(q;
  \begin{smallmatrix}
    (n,m)\\
    3s,3t
  \end{smallmatrix}
  )
  =
  \frac{1}{q^{\frac{N}{2}}-q^{-\frac{N}{2}}}
  \sum_{b=0}^{N-1}
  \sum_{\substack{
      |2b-N+1|\leq c\leq 2b+N-1
      \\
      c+N:\text{odd}
    }}
  \sum_{r=-\frac{c}{2}}^{\frac{c}{2}}
  \left(
    q^{s t r^2 - (m s +n t ) r+\frac{m n}{2}} -
    q^{s t r^2 - (m s-n t ) r-\frac{m n}{2}} 
  \right),
\end{equation}
we get~\eqref{character_X+}
and~\eqref{character_X-} by the similar computations
\begin{itemize}
\item for odd $N$
  \begin{equation}
  \lim_{N\to\infty}
  \left(
    q^{\frac{(m s)^2+(n t)^2}{4 s t}
      -\frac{N}{2}
    }
    \mathcal{J}_{N}(q;
    \begin{smallmatrix}
      (n,m)
      \\
      3s, 3t
    \end{smallmatrix}
    )
    -\frac{3N^2+1}{4}
    \Phi_{s,t}^{(n,m)}(\tau)
  \right)
  =
  \eta(\tau) \ch_q \mathcal{X}_{n,m}^+,
\end{equation}

\item for even $N$
  \begin{equation}
    \label{eq:3}
    \lim_{N\to\infty}
    \left(
      q^{\frac{(m s)^2+(n t)^2}{4 s t}
        -\frac{N}{2}
      }
      \mathcal{J}_{N}(q;
      \begin{smallmatrix}
        (n,m)
        \\
        3s, 3t
      \end{smallmatrix}
      )
      +
      \frac{3N^2}{4}
      \Phi_{s,t}^{(s-n,m)}(\tau)
    \right)
    =
    \eta(\tau) \,
    \ch_q \mathcal{X}_{n,m}^- .
  \end{equation}
\end{itemize}

\end{remark}

\section{log VOA and Atiyah--Bott formula}
\label{section: logVOA and Atiyah-Bott formula}
We explain a
geometrical
method to calculate the
character of the irreducible modules $\mathcal{X}_{n,m}^{\pm}$ of
$\mathcal{K}_{1,1}^{+}$ using the Atiyah--Bott formula.

\subsection{Geometric construction of $(1,t)$-log VOA and Atiyah--Bott formula}
The geometric construction of $(s,t)$-log VOA for $s=1$ was proposed in \cite{FeiTip10} and given a rigorous mathematical proof in \cite{Sugimoto1,Sugimoto2}.
That is, the $(1,t)$-log VOA
\footnote{In other literature, it is often represented by the symbol $W(t)_Q$ or $W_{\sqrt{t}Q}$.}
is given by the space of global sections 
\begin{align}
H^0(G\times_{B}V_{\sqrt{t}Q})
\end{align}
of the homogeneous vector bundle $G\times_{B}V_{\sqrt{t}Q}$ over the
flag variety $G/{B}$, where $Q$ is the root lattice of $G$ and
$V_{\sqrt{t}Q}$ is the lattice VOA associated with the rescaled root
lattice $\sqrt{t}Q$.
$B$ is the (lower) Borel subgroup of $G$.
Furthermore, for an irreducible module $V_{\sqrt{t}Q+\lambda}$ over $V_{\sqrt{t}Q}$, $H^0(G\times_{B}V_{\sqrt{t}Q+\lambda})$ is an $H^0(G\times_{B}V_{\sqrt{t}Q})$-module (where $\lambda=-\sqrt{t}\lambda_0+\lambda_t$ and $\lambda_0$ is a minuscule weight).
By the main results of~\cite{Sugimoto1,Sugimoto2}, for $\lambda$ such
that $(\sqrt{t}\lambda_t+\rho,\theta)\leq t$
where $\rho$ and $\theta$ are  respectively the Weyl vector and highest root,
$H^0(G\times_{B}V_{\sqrt{t}Q+\lambda})$ is irreducible as
$H^0(G\times_{B}V_{\sqrt{t}Q})$-module and
$H^k(G\times_{B}V_{\sqrt{t}Q+\lambda})=0$ for $k>0$.
In particular, by using the Atiyah--Bott fixed point formula \cite{AtiyahBott}
\begin{align}\label{Atiyah-Bott formula}
\sum_{k\geq 0}(-1)^k\ch_{q,z}H^k(G\times_{B}V)
=\sum_{\beta\in P_+}\ch_{z}L(\beta)\sum_{\sigma\in W}(-1)^{l(\sigma)}\ch_{q}V^{h=\sigma\circ\beta},
\end{align}
where $\ch_{z}L(\beta)$ is the Weyl character formula of the
irreducible module $L(\beta)$ of $\mathfrak{g}$ with highest
weight~$\beta$, we obtain the character formula
\begin{align}
\ch_{q,z}H^0(G\times_{B}V_{\sqrt{t}Q+\lambda})
=
\sum_{\beta\in P_+}\ch_{z}L(\beta)\sum_{\sigma\in W}(-1)^{l(\sigma)}\frac{q^{\frac{1}{2}|-\sqrt{t}\sigma(\beta+\rho)+\lambda_t+\frac{1}{\sqrt{t}}\rho|^2}}{\eta(\tau)^{\operatorname{rank}\mathfrak{g}}}.
\end{align}
The singlet $(1,t)$-log VOA is given by $H^0(G\times_{B}V_{\sqrt{t}Q})^{h=0}$ and $H^0(G\times_{B}V_{\sqrt{t}Q+\lambda})^{h=\gamma}$ are its modules.
By using the corollary of the Atiyah--Bott fixed point formula
\begin{align}\label{Atiyah-Bott formula 2}
\ch_{q}H^0(G\times_{B}V)^{h=\gamma}
=\sum_{\beta\in P_+}m_{\beta,\gamma}\sum_{\sigma\in W}(-1)^{l(\sigma)}\ch_{q}V^{h=\sigma\circ\beta},
\end{align}
where $m_{\beta,\gamma}$ is the Kostant multiplicity,
the character of $H^0(G\times_{B}V_{\sqrt{t}Q+\lambda})^{h=\gamma}$ is given by
\begin{align}\label{character of singlet (1,t)-logVOA module}
\ch_{q}H^0(G\times_{B}V_{\sqrt{t}Q+\lambda})^{h=\gamma}
=
\sum_{\beta\in P_+}m_{\beta,\gamma}\sum_{\sigma\in W}(-1)^{l(\sigma)}\frac{q^{\frac{1}{2}|-\sqrt{t}\sigma(\beta+\rho)+\lambda_t+\frac{1}{\sqrt{t}}\rho|^2}}{\eta(\tau)^{\operatorname{rank}\mathfrak{g}}}.
\end{align}

From now on, we consider the case of $\mathfrak{g}=\mathfrak{sl}_2$.
In this case, all $\lambda$ satisfies
$(\sqrt{t}\lambda_0+\rho,\theta)\leq t$, and thus
$H^0(G\times_{B}V_{\sqrt{t}Q+\lambda})$ is irreducible and $H^k(G\times_{B}V_{\sqrt{t}Q+\lambda})=0$ for all $\lambda$ and $k>0$.
To simplify the discussion, we consider the case $V_{n,m}^+=V_{\sqrt{st}Q+\alpha_{n,m}}$ (another case is similar).

Let us check that the character $H^0(G\times_{B}V_{\sqrt{t}Q})$ coincides with the character of $(1,t)$-log VOA.
The irreducible modules of $(1,t)$-log VOA is given by
\begin{align}
H^0(G\times_{B}V_{\sqrt{t}Q+\alpha_{m}})
\quad
(1\leq m\leq t),
\end{align}
where $\alpha_{m}=\frac{m-1}{\sqrt{t}}\varpi$.
The character of the irreducible module $V_{\sqrt{t}Q+\alpha_{m}}$ of the lattice VOA $V_{\sqrt{t}Q}$ is 
\begin{align}
\ch_{q,z}V_{\sqrt{t}Q+\alpha_{m}}
=
\sum_{k\in\mathbb{Z}}\ch_{q,z}\pi_{\alpha_{m}+2k\sqrt{t}\varpi}
=
\sum_{k\in\mathbb{Z}}z^{(\alpha,\alpha_{m}+2k\varpi)}\frac{q^{\Delta_{m,2k}}}{\eta(\tau)},
\end{align}
where $\Delta_{m,k}=\frac{(m-t+kt)^2}{4t}$ is the conformal weight of $e^{\alpha_{m,k}}\in \pi_{\alpha_{m}+k\sqrt{t}\varpi}$ plus $\frac{c}{24}$.
Note that we have
\begin{align}\label{precondition 1,2}
\ch_q(V_{\sqrt{t}Q+\alpha_{m}})^{h=\sigma_1\circ(2k\varpi)}
=
\ch_q(V_{\sqrt{t}(Q-\varpi)+\alpha_{t-m}})^{h=(2k+1)\varpi}
\end{align}
because of $|\beta|^2=|\sigma(\beta)|^2$.
Then we obtain the character of
$H^0(G\times_BV_{\sqrt{t}Q+\alpha_{m}})^{h=0}$ as
\begin{equation}\label{character of irr (1,t)-logVOA}
  \begin{aligned}[b]
&\ch_qH^0(G\times_{B}V_{\sqrt{t}Q+\alpha_{m}})^{h=0}\\
=&\sum_{\beta\in P_+}m_{\beta,0}\sum_{\sigma\in W}(-1)^{l(\sigma)}\ch_q(V_{\sqrt{t}Q+\alpha_{m}})^{h=\sigma\circ\beta}\\
=&\sum_{k\geq 0}\sum_{\sigma\in W}(-1)^{l(\sigma)}\ch_q(V_{\sqrt{t}Q+\alpha_{m}})^{h=\sigma\circ(2k\varpi)}\\
=&\sum_{k\geq 0}
   \left(
   \ch_q(V_{\sqrt{t}Q+\alpha_{m}})^{h=2k\varpi}
   -
   \ch_q(V_{\sqrt{t}(Q-\varpi)+\alpha_{t-m}})^{h=(2k+1)\varpi)}
   \right)\\
  =&\frac{1}{\eta(\tau)}\sum_{k\geq 0}
     \left(
     q^{\Delta_{m,-2k}}-q^{\Delta_{t-m,-2k-1}}
     \right),
  \end{aligned}
\end{equation}
where the first and third equalities follow
from \eqref{Atiyah-Bott formula 2} and \eqref{precondition 1,2} respectively. 
In fact, it coincides with $q^{-\frac{c}{24}}\tilde{\ch}_{W(2,2t-1)}(q)$ in \cite[Section 7]{BringMilas15a} for $m=1$.

\subsection{Geometric construction of $(s,t)$-log VOA and Atiyah--Bott formula}
For the case of $s\geq 2$, the $(s,t)$-log VOA and its irreducible
modules are constructed and studied
algebraically~\cite{FeiGaiSemTip06a, TsuchiyaWood}, but not yet
geometrically.
The second author conjectured that the
irreducible modules $\mathcal{X}_{n,m}^{\pm}$ over $(s,t)$-log VOA is given by studying
``$H^0(G\times_{B}H^0(G\times_{B}V_{n,m}^{\pm}))$".
In the following, we  propose a method to compute the characters of $\mathcal{X}_{n,m}^{\pm}$ in Section \ref{subsec: characters of logVOA}.

We recall some results on $\mathrm{Vir}_{s,t}$ and
$\mathcal{K}_{1,1}^+$ following~\cite{TsuchiyaWood}.
We fix $n$ and $m$ as
$1\leq n< s$, $1\leq m< t$ and $k\in\mathbb{Z}$.
We denote $L_{n,m,k}$ by the irreducible $\operatorname{Vir}_{s,t}$-module with the lowest conformal weight $\Delta_{n,m,k}$~\eqref{define_Delta_n,m,k}.
Hereafter we use
{\tikz[baseline=(T.base)]
  \node[fill=red!50](T){$k$};},
{\tikz[baseline=(T.base)]
  \node[fill=blue!50](T){$k$};},
{\tikz[baseline=(T.base)]
  \node[fill=green!50](T){$k$};},
{\tikz[baseline=(T.base)]]
  \node[fill=yellow!50](T){$k$};}
as the unique simple quotient  given by the irreducible $U(\mathcal{L})$-modules 
$L_{s-n,m,-k}$, $L_{n,m,-k}$, $L_{s-n,t-m,-k}$, $L_{n,t-m,-k}$,
respectively.
Then
it is known~\cite{FeiginFuchs,TsuchiyaWood} (see also \cite{IoharaKoga}) that
the socle sequence of $V_{n,m}^+$ as $\operatorname{Vir}_{s,t}$-module
is given by Fig.~\ref{fig: socle sequence of V+},
and that
the irreducible $\mathcal{K}_{1,1}$-module
$\mathcal{X}_{n,m}^{+}
= \im \mathcal{Q}^{[s-n]}_{+}\cap\im \mathcal{Q}^{[t-m]}_{-}$
is
the $\mathrm{Vir}_{s,t}$-submodule of $V^+_{n,m}$
which consists of all 
{\tikz[baseline=(T.base)]
  {\node[fill=red!50](T){$k$};}
}
in Figure~\ref{fig: socle sequence of V+}.
It was shown~\cite{TsuchiyaWood}
that
$X_{n,m,+}=\im\mathcal{Q}^{[s-n]}_{+}\subseteq V_{n,m}^+$
given in Figure~\ref{fig: socle sequence of X+}
has the $B$-module structure defined by the Frobenius homomorphism
(the $H$-action is given by $h=-\frac{1}{\sqrt{st}}(\alpha_{(0)}-(\alpha,\alpha_{n,m}))$).
Under the $B$-module structure, $\mathcal{X}_{n,m}^+$ is regarded as the maximal $G$-submodule of $X_{n,m,+}$.
In the same manner as the case of $(1,t)$-log VOA \cite[Lemma 4.19]{Sugimoto1}, the map
\begin{align}
H^0(G\times_BX_{n,m,+})\hookrightarrow X_{n,m,+}, \quad 
f\mapsto f(\operatorname{id}_{G/B})
\end{align}
sends $H^0(G\times_BX_{n,m,+})$ to the maximal $G$-submodule of $X_{n,m,+}$.
Therefore we can regard $\mathcal{X}_{n,m}^+\simeq H^0(G\times_{B}X_{n,m,+})$ and set 
\begin{align}\label{align: tildeH=X}
\tilde{H}^0(G\times_BV_{n,m}^{+}):=X_{n,m,+}.
\end{align}

\begin{figure}[htbp]
\centering
\begin{tikzpicture}[scale=.4]
\node[fill=blue!50] (00r) at (10,10) {$0$};
\node[fill=yellow!50] (01l) at (8,8) {$1$};
\node[fill=red!50] (01r) at (10,8) {$1$};
\node[fill=green!50] (02l) at (8,6) {$2$};
\node[fill=blue!50] (02r) at (10,6) {$2$};
\node[fill=yellow!50] (03l) at (8,4) {$3$};
\node[fill=red!50] (03r) at (10,4) {$3$};
\node[fill=green!50] (04l) at (8,2) {$4$};
\node[fill=blue!50] (04r) at (10,2) {$4$};
\node[fill=yellow!50] (05l) at (8,0) {$5$};
\node[fill=red!50] (05r) at (10,0) {$5$};

\draw[arrows=->] (05l)--(04l);
\draw[arrows=->] (03l)--(04l);
\draw[arrows=->] (03l)--(02l);
\draw[arrows=->] (01l)--(02l);
\draw[arrows=->] (01l)--(00r);
\draw[arrows=->] (00r)--(01r);
\draw[arrows=->] (02r)--(01r);
\draw[arrows=->] (02r)--(03r);
\draw[arrows=->] (04r)--(03r);
\draw[arrows=->] (04r)--(05r);

\draw[arrows=->] (05l)--(04r);
\draw[arrows=->] (04l)--(05r);
\draw[arrows=->] (04l)--(03r);
\draw[arrows=->] (03l)--(04r);
\draw[arrows=->] (03l)--(02r);
\draw[arrows=->] (02l)--(03r);
\draw[arrows=->] (02l)--(01r);
\draw[arrows=->] (01l)--(02r);

\node[fill=blue!50] (22r) at (6,6) {$2$};
\node[fill=yellow!50] (23l) at (4,4) {$3$};
\node[fill=red!50] (23r) at (6,4) {$3$};
\node[fill=green!50] (24l) at (4,2) {$4$};
\node[fill=blue!50] (24r) at (6,2) {$4$};
\node[fill=yellow!50] (25l) at (4,0) {$5$};
\node[fill=red!50] (25r) at (6,0) {$5$};

\draw[arrows=->] (25l)--(24l);
\draw[arrows=->] (23l)--(24l);
\draw[arrows=->] (22r)--(23r);
\draw[arrows=->] (24r)--(23r);
\draw[arrows=->] (24r)--(25r);

\draw[arrows=->] (25l)--(24r);
\draw[arrows=->] (24l)--(25r);
\draw[arrows=->] (24l)--(23r);
\draw[arrows=->] (23l)--(24r);
\draw[arrows=->] (23l)--(22r);

\node[fill=blue!50] (44r) at (2,2) {$4$};
\node[fill=yellow!50] (45l) at (0,0) {$5$};
\node[fill=red!50] (45r) at (2,0) {$5$};

\draw[arrows=->] (44r)--(45r);

\draw[arrows=->] (45l)--(44r);

\node[fill=green!50] (-22l) at (12,6) {$2$};
\node[fill=yellow!50] (-23l) at (12,4) {$3$};
\node[fill=red!50] (-23r) at (14,4) {$3$};
\node[fill=green!50] (-24l) at (12,2) {$4$};
\node[fill=blue!50] (-24r) at (14,2) {$4$};
\node[fill=yellow!50] (-25l) at (12,0) {$5$};
\node[fill=red!50] (-25r) at (14,0) {$5$};

\draw[arrows=->] (-25l)--(-24l);
\draw[arrows=->] (-23l)--(-24l);
\draw[arrows=->] (-22l)--(-23r);
\draw[arrows=->] (-24r)--(-23r);
\draw[arrows=->] (-24r)--(-25r);

\draw[arrows=->] (-25l)--(-24r);
\draw[arrows=->] (-24l)--(-25r);
\draw[arrows=->] (-24l)--(-23r);
\draw[arrows=->] (-23l)--(-24r);
\draw[arrows=->] (-23l)--(-22l);

\node[fill=green!50] (-44l) at (16,2) {$4$};
\node[fill=yellow!50] (-45l) at (16,0) {$5$};
\node[fill=red!50] (-45r) at (18,0) {$5$};

\draw[arrows=->] (-44l)--(-45r);

\draw[arrows=->] (-45l)--(-44l);
\end{tikzpicture}
\caption{
 The socle sequence of ${V}_{n,m}^+$ (at $h=4\varpi,2\varpi,0,-2\varpi,-4\varpi$).
 We denote $a\rightarrow b$ as  $b\in U(\mathcal{L})a$.
}
\label{fig: socle sequence of V+}
\end{figure}
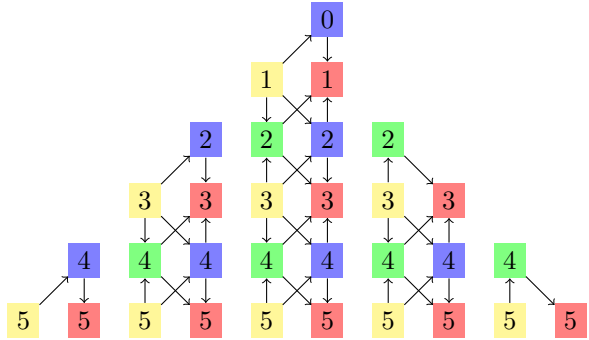

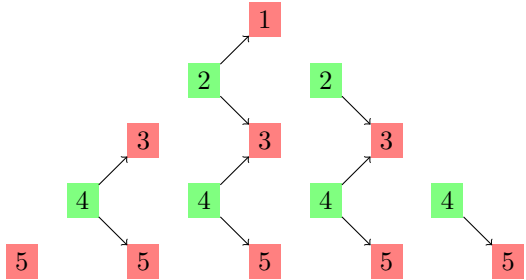
\begin{figure}[htbp]
  \centering
  \begin{tikzpicture}[scale=.4]
    \node[fill=red!50] (01r) at (10,8) {$1$};
    \node[fill=green!50] (02l) at (8,6) {$2$};
    \node[fill=red!50] (03r) at (10,4) {$3$};
    \node[fill=green!50] (04l) at (8,2) {$4$};
    \node[fill=red!50] (05r) at (10,0) {$5$};
    
    \draw[arrows=->] (04l)--(05r);
    \draw[arrows=->] (04l)--(03r);
    \draw[arrows=->] (02l)--(03r);
    \draw[arrows=->] (02l)--(01r);
    
    \node[fill=red!50] (23r) at (6,4) {$3$};
    \node[fill=green!50] (24l) at (4,2) {$4$};
    \node[fill=red!50] (25r) at (6,0) {$5$};
    
    \draw[arrows=->] (24l)--(25r);
    \draw[arrows=->] (24l)--(23r);

    \node[fill=red!50] (45r) at (2,0) {$5$};

    \node[fill=green!50] (-22l) at (12,6) {$2$};
    \node[fill=red!50] (-23r) at (14,4) {$3$};
    \node[fill=green!50] (-24l) at (12,2) {$4$};
    \node[fill=red!50] (-25r) at (14,0) {$5$};

    \draw[arrows=->] (-22l)--(-23r);
    \draw[arrows=->] (-24l)--(-25r);
    \draw[arrows=->] (-24l)--(-23r);

    \node[fill=green!50] (-44l) at (16,2) {$4$};
    \node[fill=red!50] (-45r) at (18,0) {$5$};

    \draw[arrows=->] (-44l)--(-45r);
  \end{tikzpicture}
  \caption{
    The socle sequence of $X_{n,m,+}=\im \mathcal{Q}^{[s-n]}_+$
    is depicted
    at
    $h=4\varpi,2\varpi,0,-2\varpi,-4\varpi$.
    The maximal $G$-submodule
    $\mathcal{X}_{n,m}^+=H^0(G\times_{B}X_{n,m,+})$ consists of
    all
    {\protect\tikz[baseline=(T.base)]
      \protect\node[fill=red!50](T){$k$};
    }
    in $X_{n,m,+}$.}
\label{fig: socle sequence of X+}
\end{figure}

  We introduce the $\operatorname{Vir}_{s,t}$-modules $\tilde{V}_{n,m}^\pm$
  to
  relate $X_{n,m,+}$ with  the form $H^0(G\times_B-)$.
\begin{definition}\label{def: tildeV}
Let $\tilde{V}_{n,m}^+$ and $\tilde{V}_{n,m}^-$ be the
$\operatorname{Vir}_{s,t}$-modules given by the socle sequence in
Fig.~\ref{fig: socle sequence of tildeV+} and
Fig.~\ref{fig: socle sequence of tildeV-},
respectively.
They are $B$-modules
by inclusions and projections.
Precisely
$\tilde{V}_{n,m}^\pm$ are defined as follows.
\begin{itemize}
\item 
$(\tilde{V}_{n,m}^+)^{h=k\varpi\geq 0}$ is the subspace of 
\begin{align}
(\tilde{V}_{n,m}^+)^{h=0}
:=\im Q^{[m]}_{-}\oplus(({V}_{s-n,t-m}^+)^{h=0}/\im Q^{[m]}_{-})
\end{align} 
such that 
$(\tilde{V}_{n,m}^+)^{h=k\varpi\geq 0}
\simeq 
(
\begin{tikzpicture}[baseline=(0.base)]
\node[fill=yellow!50] (0) at (0,0) {$k+1$};
\node[fill=green!50] (1) at (1.4,0) {$k+2$};
\draw[arrows=->] (1)--(0);
\draw[arrows=->] (1)--(2.4,0);
\end{tikzpicture}
\cdots)
\oplus
(
\begin{tikzpicture}[baseline=(0.base)]
\node[fill=blue!50] (0) at (0,0) {$k$};
\node[fill=red!50] (1) at (1.2,0) {$k+1$};
\draw[arrows=->] (1)--(0);
\draw[arrows=->] (1)--(2.2,0);
\end{tikzpicture}
\cdots)
$.
\item 
$(\tilde{V}_{n,m}^+)^{h=-k\varpi<0}$ is the quotient of $(\tilde{V}_{n,m}^+)^{h=0}$ such that
$(\tilde{V}_{n,m}^+)^{h=-k\varpi<0}
\simeq
(
\begin{tikzpicture}[baseline=(0.base)]
\node[fill=green!50] (0) at (0,0) {$k$};
\node[fill=yellow!50] (1) at (1.2,0) {$k+1$};
\draw[arrows=->] (0)--(1);
\draw[arrows=->] (2.2,0)--(1);
\end{tikzpicture}
\cdots)
\oplus
(
\begin{tikzpicture}[baseline=(0.base)]
\node[fill=red!50] (0) at (0,0) {$k-1$};
\node[fill=blue!50] (1) at (1.2,0) {$k$};
\draw[arrows=->] (0)--(1);
\draw[arrows=->] (2,0)--(1);
\end{tikzpicture}
\cdots)
$
.
\item 
$(\tilde{V}_{n,m}^-)^{h=k\varpi>0}$ is the subspace of 
\begin{align}
(\tilde{V}_{n,m}^-)^{h=-\varpi}
:=
\ker Q^{[m]}_{-}\oplus(({V}_{s-n,m}^+)^{h=0}/ \ker Q^{[m]}_{-})
\end{align}
such that
$(\tilde{V}_{n,m}^-)^{h=k\varpi}
\simeq 
(
\begin{tikzpicture}[baseline=(0.base)]
\node[fill=blue!50] (0) at (0,0) {$k$};
\node[fill=red!50] (1) at (1.2,0) {$k+1$};
\draw[arrows=->] (1)--(0);
\draw[arrows=->] (1)--(2.2,0);
\end{tikzpicture}
\cdots)
\oplus
(
\begin{tikzpicture}[baseline=(0.base)]
\node[fill=yellow!50] (0) at (0,0) {$k+1$};
\node[fill=green!50] (1) at (1.4,0) {$k+2$};
\draw[arrows=->] (1)--(0);
\draw[arrows=->] (1)--(2.4,0);
\end{tikzpicture}
\cdots)
$.
\item 
$(\tilde{V}_{n,m}^-)^{h=k\varpi<0}$ is the quotient of $(\tilde{V}_{n,m}^+)^{h=-\varpi}$ such that
$
(\tilde{V}_{n,m}^-)^{h=-k\varpi<0}
\simeq 
(
\begin{tikzpicture}[baseline=(0.base)]
\node[fill=red!50] (0) at (0,0) {$k-1$};
\node[fill=blue!50] (1) at (1.2,0) {$k$};
\draw[arrows=->] (0)--(1);
\draw[arrows=->] (1.9,0)--(1);
\end{tikzpicture}
\cdots)
\oplus
(
\begin{tikzpicture}[baseline=(0.base)]
\node[fill=green!50] (0) at (0,0) {$k$};
\node[fill=yellow!50] (1) at (1.2,0) {$k+1$};
\draw[arrows=->] (0)--(1);
\draw[arrows=->] (2.2,0)--(1);
\end{tikzpicture}
\cdots)
$.
\end{itemize}
\end{definition}

\begin{figure}[htbp]
\centering
\begin{tikzpicture}[scale=.4]
\node[fill=blue!50] (00r) at (10,10) {$0$};
\node[fill=yellow!50] (01l) at (8,8) {$1$};
\node[fill=red!50] (01r) at (10,8) {$1$};
\node[fill=green!50] (02l) at (8,6) {$2$};
\node[fill=blue!50] (02r) at (10,6) {$2$};
\node[fill=yellow!50] (03l) at (8,4) {$3$};
\node[fill=red!50] (03r) at (10,4) {$3$};
\node[fill=green!50] (04l) at (8,2) {$4$};
\node[fill=blue!50] (04r) at (10,2) {$4$};
\node[fill=yellow!50] (05l) at (8,0) {$5$};
\node[fill=red!50] (05r) at (10,0) {$5$};

\draw[arrows=->] (04l)--(05l);
\draw[arrows=->] (04l)--(03l);
\draw[arrows=->] (02l)--(03l);
\draw[arrows=->] (02l)--(01l);
\draw[arrows=->, dotted] (00r)--(01l);
\draw[arrows=->] (01r)--(00r);
\draw[arrows=->] (01r)--(02r);
\draw[arrows=->] (03r)--(02r);
\draw[arrows=->] (03r)--(04r);
\draw[arrows=->] (05r)--(04r);

\draw[arrows=->, dotted] (04r)--(05l);
\draw[arrows=->, dotted] (05r)--(04l);
\draw[arrows=->, dotted] (03r)--(04l);
\draw[arrows=->, dotted] (04r)--(03l);
\draw[arrows=->, dotted] (02r)--(03l);
\draw[arrows=->, dotted] (03r)--(02l);
\draw[arrows=->, dotted] (01r)--(02l);
\draw[arrows=->, dotted] (02r)--(01l);

\node[fill=blue!50] (22r) at (6,6) {$2$};
\node[fill=yellow!50] (23l) at (4,4) {$3$};
\node[fill=red!50] (23r) at (6,4) {$3$};
\node[fill=green!50] (24l) at (4,2) {$4$};
\node[fill=blue!50] (24r) at (6,2) {$4$};
\node[fill=yellow!50] (25l) at (4,0) {$5$};
\node[fill=red!50] (25r) at (6,0) {$5$};

\draw[arrows=->] (24l)--(25l);
\draw[arrows=->] (24l)--(23l);
\draw[arrows=->] (23r)--(22r);
\draw[arrows=->] (23r)--(24r);
\draw[arrows=->] (25r)--(24r);

\draw[arrows=->, dotted] (24r)--(25l);
\draw[arrows=->, dotted] (25r)--(24l);
\draw[arrows=->, dotted] (23r)--(24l);
\draw[arrows=->, dotted] (24r)--(23l);
\draw[arrows=->, dotted] (22r)--(23l);

\node[fill=blue!50] (44r) at (2,2) {$4$};
\node[fill=yellow!50] (45l) at (0,0) {$5$};
\node[fill=red!50] (45r) at (2,0) {$5$};

\draw[arrows=->] (45r)--(44r);

\draw[arrows=->, dotted] (44r)--(45l);

\node[fill=red!50] (-21r) at (14,8) {$1$};
\node[fill=blue!50] (-22r) at (14,6) {$2$};
\node[fill=green!50] (-22l) at (12,6) {$2$};
\node[fill=yellow!50] (-23l) at (12,4) {$3$};
\node[fill=red!50] (-23r) at (14,4) {$3$};
\node[fill=green!50] (-24l) at (12,2) {$4$};
\node[fill=blue!50] (-24r) at (14,2) {$4$};
\node[fill=yellow!50] (-25l) at (12,0) {$5$};
\node[fill=red!50] (-25r) at (14,0) {$5$};

\draw[arrows=->] (-24l)--(-25l);
\draw[arrows=->] (-24l)--(-23l);
\draw[arrows=->] (-22l)--(-23l);
\draw[arrows=->, dotted] (-22r)--(-23l);
\draw[arrows=->] (-23r)--(-24r);
\draw[arrows=->] (-25r)--(-24r);

\draw[arrows=->, dotted] (-24r)--(-25l);
\draw[arrows=->, dotted] (-25r)--(-24l);
\draw[arrows=->, dotted] (-23r)--(-24l);
\draw[arrows=->, dotted] (-24r)--(-23l);
\draw[arrows=->, dotted] (-23r)--(-22l);

\draw[arrows=->] (-21r)--(-22r);
\draw[arrows=->, dotted] (-21r)--(-22l);
\draw[arrows=->] (-23r)--(-22r);
\draw[arrows=->, dotted] (-23r)--(-22l);

\node[fill=red!50] (-43r) at (18,4) {$3$};
\node[fill=blue!50] (-44r) at (18,2) {$4$};
\node[fill=green!50] (-44l) at (16,2) {$4$};
\node[fill=yellow!50] (-45l) at (16,0) {$5$};
\node[fill=red!50] (-45r) at (18,0) {$5$};

\draw[arrows=->, dotted] (-45r)--(-44l);

\draw[arrows=->] (-44l)--(-45l);

\draw[arrows=->, dotted] (-43r)--(-44l);
\draw[arrows=->] (-43r)--(-44r);
\draw[arrows=->] (-45r)--(-44r);
\draw[arrows=->, dotted] (-44r)--(-45l);
\end{tikzpicture}
\caption{
The socle sequence of $\tilde{V}_{n,m}^+$ (at $h=4\varpi,2\varpi,0,-2\varpi,-4\varpi$).
Here
{\protect
\tikz[baseline=(01l.base)]{
  \protect\node (00r) at (0.8,0) {$b$};
  \protect\node (01l) at (0,0) {$a$};
  \protect\draw[arrows=->,dotted] (01l)--(00r);
}}
means that $a\rightarrow b$ in $({V}_{s-n,t-m}^+)^{h=0}$, but not in $\tilde{V}_{n,m}^+$.
}
\label{fig: socle sequence of tildeV+}
\end{figure}
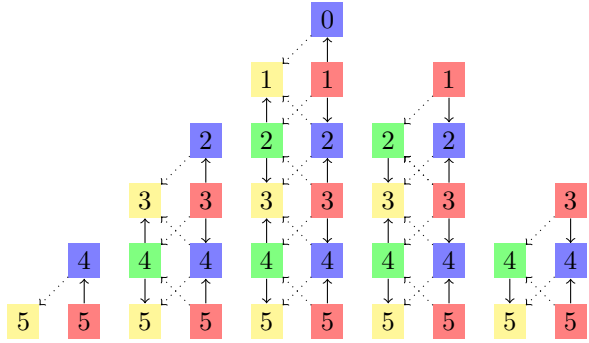

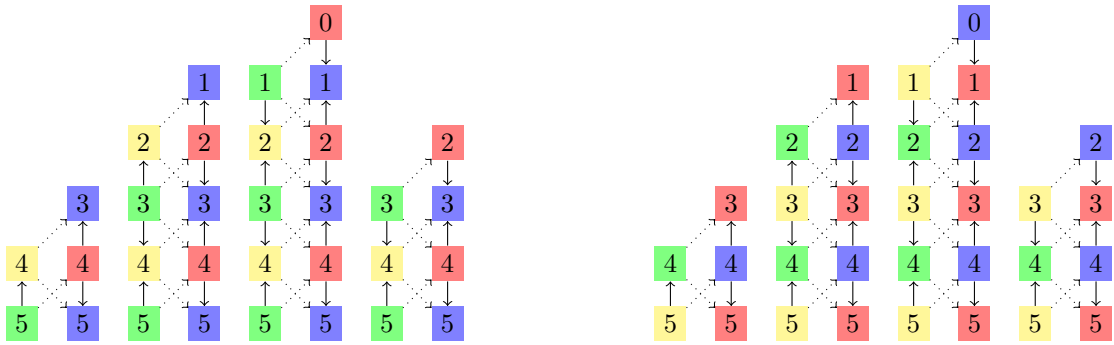
\begin{figure}[htbp]
  \begin{minipage}{.46\linewidth}
\centering
\begin{tikzpicture}[scale=.4]
\node[fill=blue!50] (11r) at (10,8) {$1$};
\node[fill=yellow!50] (12l) at (8,6) {$2$};
\node[fill=red!50] (12r) at (10,6) {$2$};
\node[fill=green!50] (13l) at (8,4) {$3$};
\node[fill=blue!50] (13r) at (10,4) {$3$};
\node[fill=yellow!50] (14l) at (8,2) {$4$};
\node[fill=red!50] (14r) at (10,2) {$4$};
\node[fill=green!50] (15l) at (8,0) {$5$};
\node[fill=blue!50] (15r) at (10,0) {$5$};

;
\draw[arrows=->] (15l)--(14l);
\draw[arrows=->] (13l)--(14l);
\draw[arrows=->] (13l)--(12l);
\draw[arrows=->, dotted] (12l)--(11r);
\draw[arrows=->] (12r)--(11r);
\draw[arrows=->] (12r)--(13r);
\draw[arrows=->] (14r)--(13r);
\draw[arrows=->] (14r)--(15r);

\draw[arrows=->, dotted] (15l)--(14r);
\draw[arrows=->, dotted] (14l)--(15r);
\draw[arrows=->, dotted] (14l)--(13r);
\draw[arrows=->, dotted] (13l)--(14r);
\draw[arrows=->, dotted] (13l)--(12r);
\draw[arrows=->, dotted] (12l)--(13r);

\node[fill=blue!50] (33r) at (6,4) {$3$};
\node[fill=yellow!50] (34l) at (4,2) {$4$};
\node[fill=red!50] (34r) at (6,2) {$4$};
\node[fill=green!50] (35l) at (4,0) {$5$};
\node[fill=blue!50] (35r) at (6,0) {$5$};

\draw[arrows=->] (35l)--(34l);
\draw[arrows=->] (34r)--(33r);
\draw[arrows=->] (34r)--(35r);

\draw[arrows=->, dotted] (35l)--(34r);
\draw[arrows=->, dotted] (34l)--(35r);
\draw[arrows=->, dotted] (34l)--(33r);

\node[fill=red!50] (-10r) at (14,10) {$0$};
\node[fill=blue!50] (-11r) at (14,8) {$1$};
\node[fill=green!50] (-11l) at (12,8) {$1$};
\node[fill=yellow!50] (-12l) at (12,6) {$2$};
\node[fill=red!50] (-12r) at (14,6) {$2$};
\node[fill=green!50] (-13l) at (12,4) {$3$};
\node[fill=blue!50] (-13r) at (14,4) {$3$};
\node[fill=yellow!50] (-14l) at (12,2) {$4$};
\node[fill=red!50] (-14r) at (14,2) {$4$};
\node[fill=green!50] (-15l) at (12,0) {$5$};
\node[fill=blue!50] (-15r) at (14,0) {$5$};

\draw[arrows=->] (-15l)--(-14l);
\draw[arrows=->] (-13l)--(-14l);
\draw[arrows=->] (-13l)--(-12l);
\draw[arrows=->, dotted] (-11l)--(-12r);
\draw[arrows=->] (-12r)--(-13r);
\draw[arrows=->] (-14r)--(-13r);
\draw[arrows=->] (-14r)--(-15r);

\draw[arrows=->, dotted] (-15l)--(-14r);
\draw[arrows=->, dotted] (-14l)--(-15r);
\draw[arrows=->, dotted] (-14l)--(-13r);
\draw[arrows=->, dotted] (-13l)--(-14r);
\draw[arrows=->, dotted] (-13l)--(-12r);
\draw[arrows=->, dotted] (-12l)--(-13r);
\draw[arrows=->] (-11l)--(-12l);

\draw[arrows=->, dotted] (-11l)--(-10r);
\draw[arrows=->] (-10r)--(-11r);
\draw[arrows=->] (-12r)--(-11r);
\draw[arrows=->, dotted] (-12l)--(-11r);

\node[fill=red!50] (-32r) at (18,6) {$2$};
\node[fill=blue!50] (-33r) at (18,4) {$3$};
\node[fill=green!50] (-33l) at (16,4) {$3$};
\node[fill=yellow!50] (-34l) at (16,2) {$4$};
\node[fill=red!50] (-34r) at (18,2) {$4$};
\node[fill=green!50] (-35l) at (16,0) {$5$};
\node[fill=blue!50] (-35r) at (18,0) {$5$};

\draw[arrows=->, dotted] (-34l)--(-35r);
\draw[arrows=->, dotted] (-33l)--(-34r);

\draw[arrows=->] (-35l)--(-34l);
\draw[arrows=->] (-33l)--(-34l);
\draw[arrows=->] (-34r)--(-35r);
\draw[arrows=->, dotted] (-35l)--(-34r);

\draw[arrows=->, dotted] (-33l)--(-32r);
\draw[arrows=->] (-32r)--(-33r);
\draw[arrows=->] (-34r)--(-33r);
\draw[arrows=->, dotted] (-34l)--(-33r);
\end{tikzpicture}
\end{minipage}
\hfill
\begin{minipage}{.46\linewidth}
  \centering
\begin{tikzpicture}[scale=.4]
\node[fill=red!50] (11r) at (10,8) {$1$};
\node[fill=green!50] (12l) at (8,6) {$2$};
\node[fill=blue!50] (12r) at (10,6) {$2$};
\node[fill=yellow!50] (13l) at (8,4) {$3$};
\node[fill=red!50] (13r) at (10,4) {$3$};
\node[fill=green!50] (14l) at (8,2) {$4$};
\node[fill=blue!50] (14r) at (10,2) {$4$};
\node[fill=yellow!50] (15l) at (8,0) {$5$};
\node[fill=red!50] (15r) at (10,0) {$5$};

;
\draw[arrows=->] (15l)--(14l);
\draw[arrows=->] (13l)--(14l);
\draw[arrows=->] (13l)--(12l);
\draw[arrows=->, dotted] (12l)--(11r);
\draw[arrows=->] (12r)--(11r);
\draw[arrows=->] (12r)--(13r);
\draw[arrows=->] (14r)--(13r);
\draw[arrows=->] (14r)--(15r);

\draw[arrows=->, dotted] (15l)--(14r);
\draw[arrows=->, dotted] (14l)--(15r);
\draw[arrows=->, dotted] (14l)--(13r);
\draw[arrows=->, dotted] (13l)--(14r);
\draw[arrows=->, dotted] (13l)--(12r);
\draw[arrows=->, dotted] (12l)--(13r);

\node[fill=red!50] (33r) at (6,4) {$3$};
\node[fill=green!50] (34l) at (4,2) {$4$};
\node[fill=blue!50] (34r) at (6,2) {$4$};
\node[fill=yellow!50] (35l) at (4,0) {$5$};
\node[fill=red!50] (35r) at (6,0) {$5$};

\draw[arrows=->] (35l)--(34l);
\draw[arrows=->] (34r)--(33r);
\draw[arrows=->] (34r)--(35r);

\draw[arrows=->, dotted] (35l)--(34r);
\draw[arrows=->, dotted] (34l)--(35r);
\draw[arrows=->, dotted] (34l)--(33r);

\node[fill=blue!50] (-10r) at (14,10) {$0$};
\node[fill=red!50] (-11r) at (14,8) {$1$};
\node[fill=yellow!50] (-11l) at (12,8) {$1$};
\node[fill=green!50] (-12l) at (12,6) {$2$};
\node[fill=blue!50] (-12r) at (14,6) {$2$};
\node[fill=yellow!50] (-13l) at (12,4) {$3$};
\node[fill=red!50] (-13r) at (14,4) {$3$};
\node[fill=green!50] (-14l) at (12,2) {$4$};
\node[fill=blue!50] (-14r) at (14,2) {$4$};
\node[fill=yellow!50] (-15l) at (12,0) {$5$};
\node[fill=red!50] (-15r) at (14,0) {$5$};

\draw[arrows=->] (-15l)--(-14l);
\draw[arrows=->] (-13l)--(-14l);
\draw[arrows=->] (-13l)--(-12l);
\draw[arrows=->, dotted] (-11l)--(-12r);
\draw[arrows=->] (-12r)--(-13r);
\draw[arrows=->] (-14r)--(-13r);
\draw[arrows=->] (-14r)--(-15r);

\draw[arrows=->, dotted] (-15l)--(-14r);
\draw[arrows=->, dotted] (-14l)--(-15r);
\draw[arrows=->, dotted] (-14l)--(-13r);
\draw[arrows=->, dotted] (-13l)--(-14r);
\draw[arrows=->, dotted] (-13l)--(-12r);
\draw[arrows=->, dotted] (-12l)--(-13r);
\draw[arrows=->] (-11l)--(-12l);

\draw[arrows=->, dotted] (-11l)--(-10r);
\draw[arrows=->] (-10r)--(-11r);
\draw[arrows=->] (-12r)--(-11r);
\draw[arrows=->, dotted] (-12l)--(-11r);

\node[fill=blue!50] (-32r) at (18,6) {$2$};
\node[fill=red!50] (-33r) at (18,4) {$3$};
\node[fill=yellow!50] (-33l) at (16,4) {$3$};
\node[fill=green!50] (-34l) at (16,2) {$4$};
\node[fill=blue!50] (-34r) at (18,2) {$4$};
\node[fill=yellow!50] (-35l) at (16,0) {$5$};
\node[fill=red!50] (-35r) at (18,0) {$5$};

\draw[arrows=->, dotted] (-34l)--(-35r);
\draw[arrows=->, dotted] (-33l)--(-34r);

\draw[arrows=->] (-35l)--(-34l);
\draw[arrows=->] (-33l)--(-34l);
\draw[arrows=->] (-34r)--(-35r);
\draw[arrows=->, dotted] (-35l)--(-34r);

\draw[arrows=->, dotted] (-33l)--(-32r);
\draw[arrows=->] (-32r)--(-33r);
\draw[arrows=->] (-34r)--(-33r);
\draw[arrows=->, dotted] (-34l)--(-33r);
\end{tikzpicture}
\end{minipage}
\caption{The socle sequences of $\tilde{V}_{n,m}^-$
  (left)
  and
  $\tilde{V}_{s-n,m}^-$
  (right)
  are depicted 
  at $h=3\varpi,\varpi,-\varpi,-3\varpi$.
}
\label{fig: socle sequence of tildeV-}
\end{figure}

The key observation is that $\tilde{V}_{n,m}^+$ in
Fig.~\ref{fig: socle sequence of tildeV+}
and $X_{n,m,+}$ in
Fig.~\ref{fig: socle sequence of X+}
have the same shape if
we regard
$(
\begin{tikzpicture}[baseline=(00r.base)]
\node[fill=green!50] (00r) at (4/3,0) {$k+1$};
\node[fill=red!50] (01l) at (0,0) {$k$};
\draw[arrows=->,dotted] (01l)--(00r);
\end{tikzpicture}
)$
and 
$(
\begin{tikzpicture}[baseline=(00r.base)]
\node[fill=yellow!50] (00r) at (4/3,0) {$k+1$};
\node[fill=blue!50] (01l) at (0,0) {$k$};
\draw[arrows=->,dotted] (01l)--(00r);
\end{tikzpicture}
)$
in Fig.~\ref{fig: socle sequence of tildeV+}
as one component each.
This is intended to treat $\tilde{V}_{n,m}^+$ and $X_{n,m,+}$ (or $V_{\sqrt{t}Q+\alpha_m}$ in the previous subsection appearing in the $(1,t)$-log VOA setting) as if they were the same.
The following proposition is essential for the computation of the
character of $\mathcal{X}_{r,s}^{\pm}$.
\begin{prop}\label{prop: conditions for Atiyah-Bott method}
  The $B$-modules
  $\tilde{H}^0(G\times_{B}{V}_{n,m}^+)$~\eqref{align: tildeH=X}
  and $\tilde{V}_{n,m}^+$ in
Def.~\ref{def: tildeV} satisfy the following
(the same results holds if $Q$ is changed to $Q-\varpi$).
\begin{enumerate}
\item
For $\beta\in\mathbb{Z}_{\geq 0}\varpi$, we have
\begin{align}
\label{condition 1}
\ch_q\tilde{H}^0(G\times_{B}{V}_{n,m}^+)^{h=\sigma_1\circ\beta}
&=
\ch_q{H}^0(G\times_{B}\tilde{V}_{n,t-m}^-)^{h=\beta+\varpi},\\
\label{condition 2}
\ch_q(\tilde{V}_{n,m}^+)^{h=\sigma_1\circ\beta}
&=
\ch_q(\tilde{V}_{s-n,m}^-)^{h=\beta+\varpi}.
\end{align}
\item
For $k> 0$ and $\beta\in\mathbb{Z}_{\geq 0}\varpi$, we have
\begin{align}
\label{condition 3}
\ch_q\tilde{H}^0(G\times_{B}{V}_{n,m}^+)^{h=\beta}
&=
\ch_q{H}^0(G\times_{B}\tilde{V}_{s-n,m}^-)^{h=\beta+\varpi},\\
\label{condition 5}
\ch_q(\tilde{V}_{n,m}^+)^{h=k\varpi}
&=
\ch_q({V}_{n,m}^+)^{h=k\varpi}.
\end{align}
\item
We have the cohomology vanishings
\begin{align}\label{align: cohomology vanishing condition}
H^1(G\times_{B}\tilde{H}^0(G\times_{B}{V}_{n,m}^+))
={H}^1(G\times_{B}\tilde{V}_{n,m}^+)
=0.
\end{align}
\end{enumerate} 
\end{prop}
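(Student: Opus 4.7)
My plan is to separate the claims of Proposition~\ref{prop: conditions for Atiyah-Bott method} into three layers: the purely module-theoretic identities~\eqref{condition 2} and~\eqref{condition 5}, the identities~\eqref{condition 1} and~\eqref{condition 3} involving $H^0(G\times_{B}-)$, and the cohomology vanishings~\eqref{align: cohomology vanishing condition}. For the first layer I would read off~\eqref{condition 5} directly from Definition~\ref{def: tildeV}: for $k>0$ the weight space $(\tilde V_{n,m}^+)^{h=k\varpi}$ is constructed to have exactly the same composition factors, block by colored block, as $(V_{n,m}^+)^{h=k\varpi}$ in Figure~\ref{fig: socle sequence of V+}. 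Identity~\eqref{condition 2} is a reflection identity: using the invariance $|\sigma_1(\beta+\rho)|^2=|\beta+\rho|^2$ together with the conformal-weight symmetries~\eqref{relation among conformal weight}, the conformal weights appearing in $(\tilde V_{n,m}^+)^{h=\sigma_1\circ\beta}$ are put into an explicit bijection with those appearing in $(\tilde V_{s-n,m}^-)^{h=\beta+\varpi}$, extending the $(1,t)$ mechanism behind~\eqref{precondition 1,2} to the $(s,t)$ setting.

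For the second layer I would feed \eqref{condition 2} and \eqref{condition 5} into the Atiyah--Bott corollary~\eqref{Atiyah-Bott formula 2} applied to $\tilde V_{n,t-m}^-$ for~\eqref{condition 1} and to $\tilde V_{s-n,m}^-$ for~\eqref{condition 3}; the alternating sum over $W$ then collapses to precisely the red-block weight spaces of $\tilde H^0(G\times_{B}V_{n,m}^+)=X_{n,m,+}$ depicted in Figure~\ref{fig: socle sequence of X+}. This step relies on the key observation made just before the proposition: $\tilde V_{n,m}^+$ and $X_{n,m,+}$ ``have the same shape'' once each dotted-arrow pair in Figure~\ref{fig: socle sequence of tildeV+} is treated as a single block, so the character identities fall out term by term. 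The parenthetical $Q\to Q-\varpi$ variants follow by the same argument with the roles of $V^+$ and $V^-$ interchanged.

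The main obstacle is the cohomology vanishing~\eqref{align: cohomology vanishing condition}. Since $G/B\cong\mathbb{P}^1$ in the $\mathfrak{g}=\mathfrak{sl}_2$ case, any $B$-module $M$ that admits a $B$-stable filtration whose subquotients are graded pieces $L_{n',m',k}\otimes\mathbb{C}_{k'\varpi}$ with $k'\geq -1$ satisfies $H^1(G\times_{B}M)=0$: the associated graded of $G\times_{B}M$ is then a sum of $\mathcal{O}_{\mathbb{P}^1}(k')\otimes L_{n',m',k}$ with $k'\geq-1$, all of which have vanishing $H^1$ by Borel--Weil--Bott. The extra dotted arrows suppressed in Definition~\ref{def: tildeV} relative to $V_{s-n,t-m}^+$ were inserted precisely so that such a filtration exists on $\tilde V_{n,m}^+$; analogously, the Frobenius $B$-action on $X_{n,m,+}$ from~\cite{TsuchiyaWood} shifts $H$-weights in steps of $2\varpi$, ruling out subquotients of weight below $-\varpi$. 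The delicate point will be verifying that these filtrations respect the full Borel action, not only the diagonal Cartan part, at the boundary weights $h=\pm\varpi$ where the unipotent radical acts non-trivially; I expect this to require a short case analysis using the explicit form of the screening operator $\mathcal{Q}_{+}^{[s-n]}$.
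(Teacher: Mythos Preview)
Your treatment of~\eqref{condition 2},~\eqref{condition 5}, and the cohomology vanishing~\eqref{align: cohomology vanishing condition} matches the paper's proof in spirit: the paper also obtains~\eqref{condition 2} and~\eqref{condition 5} by direct comparison of the socle sequences in Figures~\ref{fig: socle sequence of V+},~\ref{fig: socle sequence of tildeV+},~\ref{fig: socle sequence of tildeV-}, and dispatches~\eqref{align: cohomology vanishing condition} by citing the $(1,t)$ argument in~\cite[Lemma~4.10]{Sugimoto1}, which is exactly the Borel--Weil--Bott filtration argument you outline.

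Where you diverge is in the second layer. You propose to prove~\eqref{condition 1} and~\eqref{condition 3} by applying the Atiyah--Bott corollary~\eqref{Atiyah-Bott formula 2} to $\tilde V_{n,t-m}^-$ and $\tilde V_{s-n,m}^-$ and then collapsing the alternating $W$-sum. The paper does \emph{not} do this: it instead uses the identification of $H^0(G\times_B M)$ with the maximal $G$-submodule of $M$ (recalled just before the proposition), reads off the socle sequence of $H^0(G\times_B\tilde V_{s-n,m}^-)$ directly as Figure~\ref{fig: socle sequence of H(tildeV-)}, and compares it block-by-block with Figure~\ref{fig: socle sequence of X+}. Identity~\eqref{condition 1} is then declared proved ``in the same manner'' as~\eqref{condition 2}, i.e.\ again by picture comparison. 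Your route is not wrong, but it is circuitous: invoking~\eqref{Atiyah-Bott formula 2} for the right-hand sides of~\eqref{condition 1} and~\eqref{condition 3} already presupposes the vanishing~\eqref{align: cohomology vanishing condition} that you place last, so you would need to reorder the argument; and the ``collapse'' of the double sum you allude to is precisely what the paper sidesteps by working with maximal $G$-submodules. The paper's approach buys a shorter, dependency-free proof; yours would buy a more formulaic verification that does not rely on reading off figures, at the cost of the extra bookkeeping you anticipate.
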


\begin{proof}
  \begin{enumerate}
  \item 
  By comparing
  Fig.~\ref{fig: socle sequence of tildeV+} with
  Fig.~\ref{fig: socle sequence of tildeV-}, we obtain \eqref{condition 2}.
  In the same manner, \eqref{condition 1} is also proved.

  \item
    By Fig.~\ref{fig: socle sequence of V+} and
    Fig.~\ref{fig: socle sequence of tildeV+}, we have \eqref{condition 5}.
    The socle sequence of $H^0(G\times_B\tilde{V}_{s-n,m}^-)$
    (namely, maximal $G$-submodule of $\tilde{V}_{s-n,m}^-$) is given by
    Fig.~\ref{fig: socle sequence of H(tildeV-)}.
    Then
    we obtain~\eqref{condition 3}
    by comparing with
    Fig.~\ref{fig: socle sequence of X+}.

  \item
The cohomology vanishing \eqref{align: cohomology vanishing condition}
follows in the same manner as the case of $(1,t)$-log VOA \cite[Lemma
4.10]{Sugimoto1}.
\end{enumerate}
\end{proof}

\begin{figure}[tbh]
\centering
\begin{tikzpicture}[scale=.4]
\node[fill=red!50] (11r) at (10,8) {$1$};
\node[fill=green!50] (12l) at (8,6) {$2$};
\node[fill=red!50] (13r) at (10,4) {$3$};
\node[fill=green!50] (14l) at (8,2) {$4$};
\node[fill=red!50] (15r) at (10,0) {$5$};

\draw[arrows=->, dotted] (12l)--(11r);
\draw[arrows=->, dotted] (14l)--(15r);
\draw[arrows=->, dotted] (14l)--(13r);
\draw[arrows=->, dotted] (12l)--(13r);

\node[fill=red!50] (33r) at (6,4) {$3$};
\node[fill=green!50] (34l) at (4,2) {$4$};
\node[fill=red!50] (35r) at (6,0) {$5$};

\draw[arrows=->, dotted] (34l)--(35r);
\draw[arrows=->, dotted] (34l)--(33r);

\node[fill=red!50] (-11r) at (14,8) {$1$};
\node[fill=green!50] (-12l) at (12,6) {$2$};
\node[fill=red!50] (-13r) at (14,4) {$3$};
\node[fill=green!50] (-14l) at (12,2) {$4$};
\node[fill=red!50] (-15r) at (14,0) {$5$};

\draw[arrows=->, dotted] (-14l)--(-15r);
\draw[arrows=->, dotted] (-14l)--(-13r);
\draw[arrows=->, dotted] (-12l)--(-13r);
\draw[arrows=->, dotted] (-12l)--(-11r);

\node[fill=red!50] (-33r) at (18,4) {$3$};
\node[fill=green!50] (-34l) at (16,2) {$4$};
\node[fill=red!50] (-35r) at (18,0) {$5$};

\draw[arrows=->, dotted] (-34l)--(-35r);
\draw[arrows=->, dotted] (-34l)--(-33r);
\end{tikzpicture}
\caption{The socle sequence of $H^0(G\times_B\tilde{V}_{s-n,m}^-)$
  is given at $h=3\varpi,\varpi,-\varpi,-3\varpi$.
}
\label{fig: socle sequence of H(tildeV-)}
\end{figure}
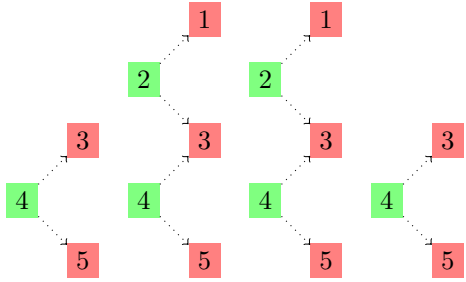

\begin{theorem}
  \begin{equation}
    \ch_q (\mathcal{X}_{n,m}^\pm)^{h=0}
    =
    H^0(G\times_B \tilde{H}^0(G\times_B
    {V}_{n,m}^\pm))^{h=0} .
    \label{character of irr (s,t)-logVOA}  
  \end{equation}
\end{theorem}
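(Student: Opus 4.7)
The plan is to reduce the claim to the identifications $\mathcal{X}_{n,m}^\pm \simeq H^0(G\times_B X_{n,m,\pm})$ and $\tilde{H}^0(G\times_B V_{n,m}^\pm):=X_{n,m,\pm}$ already established in this section, and then to verify the resulting geometric formula computationally by iterating the Atiyah--Bott formula \eqref{Atiyah-Bott formula 2}.

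For the first step, the evaluation map $f\mapsto f(\operatorname{id}_{G/B})$, transported verbatim from the proof of \cite[Lemma 4.19]{Sugimoto1}, sends $H^0(G\times_B X_{n,m,+})$ isomorphically onto the maximal $G$-submodule of $X_{n,m,+}$. That maximal $G$-submodule is by construction $\mathcal{X}_{n,m}^+$, so combining with the convention \eqref{align: tildeH=X} yields
\begin{align*}
\mathcal{X}_{n,m}^+ \simeq H^0\bigl(G\times_B \tilde{H}^0(G\times_B V_{n,m}^+)\bigr)
\end{align*}
as $B$-modules. Restricting to the trivial $H$-weight gives the $+$ case of \eqref{character of irr (s,t)-logVOA}. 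The $-$ case follows by the identical argument after replacing $Q$ by $Q-\varpi$ throughout the lattice-VOA construction.

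To see that this identification is also computationally useful and agrees with the singlet characters of Section~\ref{subsec: characters of logVOA}, I would apply Atiyah--Bott \eqref{Atiyah-Bott formula 2} to the outer cohomology and rewrite the result as $\sum_{k\geq 0}\bigl[\ch_q X_{n,m,+}^{h=2k\varpi} - \ch_q X_{n,m,+}^{h=-(2k+2)\varpi}\bigr]$, using that for $\mathfrak{sl}_2$ the Kostant multiplicity $m_{2k\varpi,0}$ equals $1$ and the affine action gives $\sigma_1\circ(2k\varpi) = -(2k+2)\varpi$. Parts \eqref{condition 1} and \eqref{condition 3} of Proposition~\ref{prop: conditions for Atiyah-Bott method} rewrite each summand as an $H^0(G\times_B\tilde{V}_{\ast}^-)$-character; a second application of \eqref{Atiyah-Bott formula 2}, which is legitimate because of the cohomology vanishing \eqref{align: cohomology vanishing condition}, together with \eqref{condition 2} and \eqref{condition 5}, then re-expresses the whole expression in terms of weight subspaces of $V_{n,m}^+$. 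Collecting the resulting $q$-series should reproduce the $k^2$-prefactor of \eqref{character_X+} (and, for the $-$ case, the $k(k+1)$-prefactor of \eqref{character_X-}).

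The main obstacle is the bookkeeping in the second Atiyah--Bott step: one must track the four coloured simple quotients in Figures~\ref{fig: socle sequence of V+}--\ref{fig: socle sequence of H(tildeV-)} column by column at each weight $k\varpi$, identify which pairs cancel via \eqref{relation among conformal weight}, and verify that the remaining telescoping sum yields exactly the prefactors in \eqref{character_X+} and \eqref{character_X-}. The fact that for $\mathfrak{sl}_2$ the Kostant multiplicities $m_{\beta,\gamma}$ are all $0$ or $1$ keeps this combinatorics manageable, and also means that the $-$ case does not produce any essentially new difficulty beyond shifting the lattice by $\varpi$.
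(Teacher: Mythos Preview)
Your proposal is essentially the paper's approach. Your first step correctly notes that, given the definition \eqref{align: tildeH=X} and the identification $\mathcal{X}_{n,m}^+\simeq H^0(G\times_B X_{n,m,+})$ already recorded in the text, the equality in \eqref{character of irr (s,t)-logVOA} is tautological; the paper does not pause to say this and goes straight to the computational verification you sketch as your second step. That computation---two applications of \eqref{Atiyah-Bott formula 2}, interleaved with the identities of Proposition~\ref{prop: conditions for Atiyah-Bott method} in the order \eqref{condition 1}, \eqref{condition 3}, then \eqref{condition 2}, \eqref{condition 5}---is exactly what the paper carries out, arriving at the explicit double sum $\tfrac{1}{\eta(\tau)}\sum_{k,k'\geq 0}\bigl(q^{\Delta_{s-n,m,-2k-2k'-1}}-q^{\Delta_{s-n,t-m,-2k-2k'-2}}-q^{\Delta_{n,m,-2k-2k'-2}}+q^{\Delta_{n,t-m,-2k-2k'-3}}\bigr)$ and then matching it against the known singlet character.

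One small correction: since you are computing the $h=0$ piece, the target is the singlet character \eqref{singlet_character} with its $|k|$ prefactor, not the full triplet character \eqref{character_X+} with its $k^2$ prefactor (the $k^2$ and $k(k+1)$ formulas are what one recovers from the $\ch_{q,z}$ version in the remark following the theorem).
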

\begin{proof}
  As a consequence of
  Prop.~\ref{prop: conditions for Atiyah-Bott method},
we can apply  the Atiyah--Bott formula~\eqref{Atiyah-Bott formula 2}.
The proof is as follows.
\begin{align*}
  &\ch_qH^0(G\times_B \tilde{H}^0(G\times_B{V}_{n,m}^+))^{h=0}
  \\
  \stackrel{
  \eqref{Atiyah-Bott formula 2}
  }{=}
  &
    \sum_{\beta\in P_+}m_{\beta,0}\sum_{\sigma\in W}(-1)^{l(\sigma)}
    \ch_q\tilde{H}^0(G\times_B{V}_{n,m}^+)^{h=\sigma\circ\beta}
  \\
  \stackrel{
  \hphantom{AAA}
  }{=}
  &\sum_{k\geq 0}\sum_{\sigma\in W}(-1)^{l(\sigma)}
    \ch_q\tilde{H}^0(G\times_B{V}_{n,m}^+)^{h=\sigma\circ
    2k\varpi}
  \\
  \stackrel{
  \eqref{condition 1}
  }{=}
  &\sum_{k\geq 0}
    \left(
    \ch_q\tilde{H}^0(G\times_B{V}_{n,m}^+)^{h= 2k\varpi}
    -
    \ch_q\tilde{H}^0(G\times_B{V}_{n,t-m}^-)^{h=
    (2k+1)\varpi}
    \right)
  \\
  \stackrel{
  \eqref{condition 3}}{=}
  &\sum_{k\geq 0}
    \left(
    \ch_q{H}^0(G\times_B\tilde{V}_{s-n,m}^-)^{h= (2k+1)\varpi}
    -
    \ch_q{H}^0(G\times_B\tilde{V}_{s-n,t-m}^+)^{h=
    (2k+2)\varpi}
    \right)
  \\
  \stackrel{
  \eqref{Atiyah-Bott formula 2}
  }{=}
  &\sum_{k\geq 0}
    \left(
    \sum_{\beta\in P_+}m_{\beta,(2k+1)\varpi}\sum_{\sigma\in W}(-1)^{l(\sigma)}
    \ch_q(\tilde{V}_{s-n,m}^-)^{h=\sigma\circ\beta}
    \right.
    \\
    &\qquad
    \left.
      -\sum_{\beta\in P_+}m_{\beta,(2k+2)\varpi}\sum_{\sigma\in W}(-1)^{l(\sigma)}
      \ch_q(\tilde{V}_{s-n,t-m}^+)^{h=\sigma\circ\beta}
      \right)
  \\
  \stackrel{
  \hphantom{AAA}
  }{=}
  &\sum_{k\geq 0}
    \left(
    \sum_{k'\geq 0}\sum_{\sigma\in W}(-1)^{l(\sigma)}
    \ch_q(\tilde{V}_{s-n,m}^-)^{h=\sigma\circ(2k+2k'+1)\varpi}
    -\sum_{k'\geq 0}\sum_{\sigma\in W}(-1)^{l(\sigma)}
    \ch_q(\tilde{V}_{s-n,t-m}^+)^{h=\sigma\circ(2k+2k'+2)\varpi}
    \right)
    \\
    \stackrel{
    \eqref{condition 2}}{=}
    &\sum_{k,k'\geq 0}
       \left(
       \left(
   \ch_q(\tilde{V}_{s-n,m}^-)^{h=(2k+2k'+1)\varpi}
-
\ch_q(\tilde{V}_{n,m}^+)^{h=(2k+2k'+2)\varpi}
       \right)
      \right.\\
    &-
  \left.
  \left(
  \ch_q(\tilde{V}_{s-n,t-m}^+)^{h=(2k+2k'+2)\varpi}
  -
  \ch_q(\tilde{V}_{n,t-m}^-)^{h=(2k+2k'+3)\varpi}
  \right)
  \right)\\
    \stackrel{\eqref{condition 5}
    }{=}
    &\sum_{k,k'\geq 0}
      \left(
  \left(
  \ch_q(\tilde{V}_{s-n,m}^-)^{h=(2k+2k'+1)\varpi}
  -
  \ch_q(\tilde{V}_{n,m}^+)^{h=(2k+2k'+2)\varpi}
  \right)
  \right.
\\
&\left.
  -\left(
  \ch_q(\tilde{V}_{s-n,t-m}^+)^{h=(2k+2k'+2)\varpi}
  -
  \ch_q(\tilde{V}_{n,t-m}^-)^{h=(2k+2k'+3)\varpi}
  \right)
  \right)\\
    \stackrel{
    \hphantom{AAA}
    }{=}
&\frac{1}{\eta(\tau)}\sum_{k,k'\geq 0}
   \left(
   q^{\Delta_{s-n,m,-2k-2k'-1}}-q^{\Delta_{s-n,t-m,-2k-2k'-2}}-q^{\Delta_{n,m,-2k-2k'-2}}+q^{\Delta_{n,t-m,-2k-2k'-3}}
  \right).
\end{align*}
By~\eqref{relation among conformal weight},
this coincides with
$\ch_q (\mathcal{X}_{n,m}^+)^{h=0}$
(see~\cite[(3.43),(3.52)]{ChChFeFeGuHaPa22a}).

We can compute the character $\ch_q (\mathcal{X}_{n,m}^-)^{h=0}$  in the same manner.
\end{proof}

\begin{remark}
    The characters~\eqref{character_X+}
    and~\eqref{character_X-}
    are given by
  \begin{equation}
    \label{eq:4}
    \ch_q(\mathcal{X}_{n,m}^\pm)
    =
    \ch_{q}H^0(G\times_B\tilde{H}^0(G\times_B{V}_{n,m}^\pm)) ,
  \end{equation}
  which follow from
  \begin{equation}
    \begin{aligned}[b]
      &\ch_{q,z}H^0(G\times_B\tilde{H}^0(G\times_B{V}_{n,m}^+))\\
      =&\frac{1}{\eta(\tau)}\sum_{k,k'\geq 0}\ch_zL(2k)
         \left(
         q^{\Delta_{s-n,m,-2k-2k'-1}}-q^{\Delta_{s-n,t-m,-2k-2k'-2}}
         -q^{\Delta_{n,m,-2k-2k'-2}}+q^{\Delta_{n,t-m,-2k-2k'-3}}
         \right) .
    \end{aligned}
  \end{equation}

\end{remark}
We have obtained the character by introducing  $(\tilde{V}_{n,m}^\pm)^{h=k\varpi}$ in Definition \ref{def: tildeV}.
Another
method
would be
to disregard
the commutativity of the $B$-action with the Virasoro action except for the conformal grading instead of taking $(\tilde{V}_{n,m}^\pm)^{h=k\varpi}$ as a Fock space.
There remains for a future work to 
introduce screening operators that define such $B$-action.
We hope to report on
a geometrical
construction of the characters of $(s,t)$-log VOA for $\mathfrak{g}$,
and on
a relationship with
$\mathfrak{g}$-quantum invariant for torus link $T_{r s, r t}$
as a generalization
of~\cite{Kanade23b}.
%

\section*{Acknowledgments}
The authors would like to thank Shashank Kanade and
Toshiki Matsusaka for useful communications.
The work of KH is supported in part by
JSPS KAKENHI Grant Numbers
JP22H01117,
JP20K03601,
JP20K03931,
JP16H03927.
SS is supported by
JSPS KAKENHI Grant Number 22J00951.


\end{document}